	\newcommand{\diff}[4]{\ensuremath{ \mathrm{d}^{#4 }_{ #3 } \left[ #1 \right] \left(  #2 \right)  }}
	\newcommand{\diffplus}[3]{\diff{#1}{#2}{#3}{+} }
	\newcommand{\diffmin}[3]{\diff{#1}{#2}{#3}{-} }
	\newcommand{\fdiff}[4]{\ensuremath{ \upsilon^{#4 }_{ #3 }  #1  \left(  #2 \right)  }}
	\newcommand{\fdiffplus}[3]{ \fdiff {#1}{#2}{+}{#3} }
	\newcommand{\fdiffmin}[3]{ \fdiff {#1}{#2}{-}{#3} }
	\newcommand{\fdiffpm}[3]{ \fdiff {#1}{#2}{\pm}{#3} }
	 \newcommand{\fracvar}[4]{\ensuremath{ \upsilon_{ #3 }^{ #4} \left[ #1 \right] \left(  #2 \right)   }}
	 \newcommand{\fracvarplus}[3]{ \fracvar {#1}{#2}{#3}{\epsilon+} }
	 \newcommand{\fracvarmin}[3]{ \fracvar {#1}{#2}{#3}{\epsilon -} }
	\newcommand{\llim}[3]{\ensuremath{ \lim\limits_{ #1 \rightarrow #2} #3 }}
	\newcommand{\fclass}[2]{\ensuremath{  \mathbb{#1}^{\, #2} }}
		\newcommand{\holder}[1]{\fclass{H}{#1} }
	\newcommand{\deltaop}[4]{\ensuremath{ \Delta_{ #3 }^{ #4} \left[ #1 \right] \left(  #2 \right)   }}
	\newcommand{\deltaplus}[2]{ \deltaop {#1}{#2}{\epsilon}{+} }
	\newcommand{\deltamin}[2]{ \deltaop {#1}{#2}{\epsilon}{-} }
	\newcommand{\sep}{,}
	\newcommand{\epnt}{\; .}
	\newcommand{\ecma}{\; ,}
\renewcommand{\dj} {\leavevmode\raisebox{-.05ex}{\makebox[0pt][l]{\hskip-.05em\accent"16\hss}}d \,}
 \newtheorem{theorem}{Theorem}
 \newtheorem{lemma}{Lemma}
 \newtheorem{corollary}{Corollary}
 \newtheorem{proposition}{Proposition}
\newtheorem{definition}{Definition}
 \newtheorem{remark}{Remark}
 \newtheorem{example}{Example}
\begin{document}
  
   \title[Taylor expansions of H\"olderian functions]{Regularized and Fractional Taylor expansions of H\"olderian functions\thanks{The work has been supported in part by a grant from Research Fund - Flanders (FWO), contract number 0880.212.840.}}
   
   \author {Dimiter Prodanov}
   \address{Correspondence: Environment, Health and Safety, IMEC vzw, Kapeldreef 75, 3001 Leuven, Belgium Correspondence: Environment, Health and Safety, IMEC vzw, Kapeldreef 75, 3001 Leuven, Belgium 
   e-mail: Dimiter.Prodanov@imec.be, dimiterpp@gmail.com   }


   	\begin{abstract}
   	 H\"olderian functions have strong non-linearities, which result in singularities in the derivatives.  
   	 This manuscript presents several fractional-order Taylor expansions of H\"olderian functions around points of non- differentiability.
   	 These expansions are derived using the concept of a fractional velocity, 
   	 which can be used to describe the singular behavior of derivatives and non-differentiable functions.  
   	 Fractional velocity is defined as the limit of the difference quotient of the increment of a function and the difference of its argument raised to a fractional power.
   	 Fractional velocity can be used to regularize ordinary derivatives. 
   	 To this end, it is possible to define regularized Taylor series and compound differential rules. 
   	 In particular a compound differential rule for Holder 1/2 functions is demonstrated.
	 The expansion is presented using the auxiliary concept of fractional co-variation of functions. 
	 
   	 \medskip
   		
   		{\it MSC 2010\/}: Primary 26A27: Secondary 26A15\sep \  26A33\sep\ 26A16\sep \
   		  47A52\sep \ 4102
   		
 \smallskip
   		
{\it Key Words and Phrases}: 
fractional calculus;  
non-differentiable functions; 
H\"older classes; 
pseudodifferential operators;
power series

 \end{abstract}

	   	\maketitle  
	   	
   	
   	\section{Introduction}
   	\label{seq:intro}
   	
   	Difference quotients of functions of fractional order have been considered initially by du Bois-Reymond  \cite{BoisReymond1875} and later by Faber \cite{Faber1909} in their studies of the point-wise differentiability of functions.
   	The concept implies what is now known as Holder-continuity of the function.
   	Later  Cherbit \cite{Cherbit1991} introduced the notion of \textsl{fractional velocity} as the limit of the fractional difference quotient. His main application was the study of fractal phenomena and physical processes for which the instantaneous velocity was not well defined. 
   	Independently,  applications to physical systems exhibiting fractal behavior have  inspired the development of local integral definitions of fractional derivatives.  
   	The definition of \textsl{local fractional derivative} introduced by Kolwankar and Gangal \cite{Kolwankar1997a} is based on the localization of Riemann-Liouville fractional derivatives towards a particular point of interest. Manipulations of this construct in many occasions  are complicated and there are only few physical applications so far. 
	On the other hand,  Ben Adda and Cresson  \cite{Adda2001},\cite{Adda2013} demonstrated equivalence of so-defined local fractional derivative with an alternative definition  based on a limit of a certain differential quotient. 
	The correspondences between the integral and the quotient difference approaches have been further investigated by  Chen et al. \cite{Chen2010} and some of the results stated initially by Ben Adda and Cresson have been clarified recently \cite{Adda2013}.    
    
 	The present work builds on the \textsl{fractional variation} operators, which in limit are
 	equivalent to the \textsl{fractional velocity} introduced by Cherbit \cite{Cherbit1991}.
 	Previous work has demonstrated some basic algebraic properties of these operators and established their limiting behavior \cite{Prodanov2015}. 

   	The main application is a formal regularization procedure for a derivative of a H\"olderian function, which allows for removal of weak singularities in the derivative caused by strong non-linearities. 
   	This becomes possible by exploiting some of the properties of the  \textsl{fractional variation} and velocity. 
   	One of main results of this procedure is the Taylor expansion of a H\"olderian function, termed the 
   	\textsl{Taylor -- It\^o} expansion, which makes use of a regularization of the integer-order derivative.
   	This can be stated as follows:
   	For a compound function $f(w, x)$ where $w(x)$ is \holder{1/2} the following expansion holds:
 \[
	\frac{\dj^{ \pm} }{dx} f(x,w) =\frac{\partial f}{\partial x} + \frac{\partial f}{\partial w}   \;  \frac{\dj^{\pm} }{dx } w (x)    \pm \frac{1}{2}   \frac{\partial ^2 f}{\partial w^2} [ w, w]^{\pm} (x) \epnt   
\] 
    		
   	The approach presented here can be used to characterize and approximate functions about points for which  integer-order derivatives are not defined. 
   	Such functions are considered in the physical theory of scale relativity \cite{Nottale1989},\cite{Nottale1998}, \cite{Nottale2011} where geodesic trajectories are considered to be of order \holder{1/2}.

   	The manuscript is organized as follows:
   	Section \ref{sec:definitions} gives general definitions and notational conventions.
   	Section \ref{sec:frdiff} introduces fractional (fractal) variation and fractional velocity.
   	Section \ref{sec:taylorexp} demonstrates fractional and mixed-order Taylor expansions.
    Section \ref{sec:reg} presents ordinary derivative regularization
   and demonstrates the compound differential rule for \holder{1/2} functions.


\section{Preliminaries}
  	\label{sec:definitions}
  	  	
  	The term \textit{function}  denotes a mapping $ f: \mathbb{R} \mapsto \mathbb{R} $ or in some cases $\mathbb{C} \mapsto \mathbb{C}$. 
  	The notation $f(x)$ is used to refer to the value of the function at the point \textit{x}.
    The term \textit{operator}  denotes the mapping from functional expressions to functional expressions.
  	The symbol \fclass{C}{0} denotes the class of continuous functions. 
  	The symbol \fclass{C}{n} -- the class of \textit{n}-times differentiable functions where  $n  \in \mathbb{N}$.
  	Square brackets are used for the arguments of operators, while round brackets are used for the arguments of functions.	
    $Dom[f]$ denotes the domain of definition of the function $f(x)$.
 
   	\begin{definition}
   		\label{def:holder}
   		Let \holder{\alpha} be the class of H\"older  continuous  functions of degree $\alpha$, $\alpha \in (0,\, 1]$.
   		That is, we say that $f$ is of class \holder{\alpha} if $\; \forall f \in \holder{\alpha} $ there exist two non-negative constants 
   		$C, \delta \in \mathbb{R} $, such that  for two  values $  x, y \in Dom[ f ]$,  for which $|x-y| \leq \delta, \  x \neq y$, the following inequality holds
   		\[
   		| f (x) - f (y) |  \leq C |x-y|^\alpha \epnt
   		\]  		
   		For mixed orders $n +\alpha>1 $,  $n \in \fclass{N}{}$,  the following extended definition holds: 
   		 H\"older class \holder{n+ \alpha}  designates the class of \fclass{\, C}{n} functions  for which the inequality 
   		\[
   		| f (x) - f (y) - P_n (x-y) |  \leq C |x-y|^{n +\alpha} \ecma
   		\]
   		holds in the interval $ \left[ x, y\right] $. $P_n (.)  $ designates a real-valued polynomial of degree $n$ of the form
   		$
   		P_n (z) = \sum\limits_{k=1}^{n}{ a_k z^k} 
   		$,
   		where $P_0(z) = 0$ and $\alpha \in (0,\, 1)$.
   	\end{definition}
	 \begin{remark}
   	Throughout the rest of the paper it will be assumed that, unless stated otherwise, the  H\"older exponent characterizing the function is point-wise unique. 
   	That is, there is only \textbf{one} fractional exponent associated with a point belonging to the domain of the function. 
   	
   	In the opposite case we will discuss functions characterized by a \textbf{H\"older spectrum}, that is the set of such exponents. 
   	Functions having more than one exponent will be denoted to as multi-H\"olderian and if necessary designated by the notation  \holder{  \alpha_1 + \alpha_2 + \ldots}. 
   	Indeed, it can be shown that if
   	$
   	 f(x) \in \holder{\alpha}
   	$   
   	then 
   	$
   	f(x) \in \holder{\alpha+ \beta}
   	$  if $ \beta > \alpha$  
   	implying that $\holder{\alpha} \subseteq \holder{\alpha +\beta}$.
   	
   	Therefore, unless stated explicitly the assignment $f(x) \in \holder{\alpha}$
   	  will \textbf{not denote} sums of functions. 
    \end{remark}
  
  	\begin{definition}
  		\label{def:deltas}
  		Let the parametrized difference operators acting on a function $f(x)$ be defined in the following way
  		\begin{flalign*}
	  	\deltaplus{f}{x}   & :=  f(x + \epsilon) - f(x) \ecma\\
  		\deltamin{f}{x} & :=  f(x) - f(x - \epsilon)  \ecma \\
  		\deltaop {f}{x}{\epsilon}{2}  &:=  f(x + \epsilon) -2 f(x) + f(x - \epsilon) \ecma
  		\end{flalign*}
  		where $\epsilon>0$. The first one we refer to as \textit{forward difference} operator, 
  		the second one we refer to as \textit{backward difference} operator and the third one as \textit{2\textsuperscript{nd} order difference} operator.
  	\end{definition}
  	
   	\section{Fractional (fractal) variation and fractional velocity of functions}
   	\label{sec:frdiff}
    Fractional (fractal) variation operators have been introduced in a previous work \cite{Prodanov2015} as: 
   	\begin{definition}
   		\label{def:fracvar}
   		Let the \textit{Fractal Variation} operators of order $\beta$ be defined as
   		\begin{align}
   			\label{eq:fracvar1}
   			\fracvarplus {f}{x}{\beta} := \frac{ \deltaplus{f}{x}  }{\epsilon ^\beta}  =\frac{ f(x+ \epsilon) - f(x) }{\epsilon ^\beta} \ecma
   			\\
   			\fracvarmin {f}{x}{\beta} :=  \frac{ \deltamin{f}{x} }{\epsilon ^\beta}  =\frac{ f(x)- f( x- \epsilon)  }{\epsilon  ^\beta} \ecma
   		\end{align}
   		where  $\epsilon >0$ and $0 < \beta \leq 1 $ are real parameters and $f(x)$ is a function.
   	\end{definition}
   In this paper we will consider the following extension of the previous definition
 
   \begin{definition}
   	\label{def:fracvarn}
   	Let the \textsl{fractional variation} operators of  mixed order $n+\beta$ acting on a function $f(x)$ be  defined as
   	\begin{align}
   		\label{eq:fracdiffn}
   		\fracvarplus {f}{x}{n+\beta} &:= (n+1)! \  \frac{ f( x+ \epsilon) - T_n (x, \epsilon) }{\epsilon^{n+\beta}}   \ecma   	\\
   		\fracvarmin {f}{x}{n+\beta} &:= (-1)^{n} (n+1)! \ \frac{T_n (x, - \epsilon) - f (x -\epsilon)  }{\epsilon^{n+\beta}}   \epnt
   	\end{align}
   	where  $T_n(x, \epsilon), \ n \in \fclass{N}{}$ is the usual Taylor polynomial 
   	\[ 
   	T_n(x, \epsilon)= f(x) + \sum\limits_{k=1}^{n} \frac{f^{(k)}(x)}{k!} \epsilon^k \ecma
   	\]
   	and $\epsilon >0$ and  $0 < \beta \leq 1 $ are real parameters.	 
   \end{definition}
   
   \begin{definition}[Mixed order velocity]
   	\label{def:frdiff2}
   	The \textsl{fractional velocity} of  mixed order $n+\beta$ of function $f(x) \in   \fclass{C}{n}$  be defined as
   	\begin{align}
   		\label{eq:fracdiff1}
   		\fdiffplus {f}{x}{n+\beta} &:=  \llim{\epsilon}{0}{  \fracvarplus {f}{x}{n+\beta}}   \ecma   	\\
   		\fdiffmin {f}{x}{n+\beta} &:=     \llim{\epsilon}{0}{\fracvarmin {f}{x}{n+\beta}   }   \ecma
   	\end{align}
   	where $\epsilon >0$ and  $0 < \alpha \leq 1 $ are real parameters.	 
   \end{definition}
   
   \begin{remark}
   	The mixed-order definition obviously applies to functions that are in general differentiable, i.e. they may be non differentiable only at isolated points. 
   	In the case of non-differentiable functions, the fractional variation should be taken only in strictly fractional order.  
   \end{remark}
   
%
   \begin{theorem}[Modular variation]
   	\label{th:modvar}
   	Let  $f(x) \in   \fclass{C}{n}$. 
   	Then the \textsl{fractional velocity} of  mixed order $n+\beta$  about  \textit{x} can be evaluated as 
   	\begin{align}
   		\label{eq:fracdiff3}
   		\fdiffplus {f}{x}{n+\beta} &=  \frac{(n+1)!}{{(n+\beta)}_{(n-1)} }
   		\llim{\epsilon}{0}{  	\frac{\deltaplus{f^{(n)}}{x} }{ \epsilon^{\beta}} }   \ecma   	\\
   		\fdiffmin {f}{x}{n+\beta} &=  \frac{(n+1)!}{{(n+\beta)}_{(n-1)} }
   		\llim{\epsilon}{0}{ \frac{ \deltamin{f^{(n)}}{x} }{\epsilon ^\beta}   }  \epnt
   	\end{align}
   	where  $\epsilon >0$ and  $0 < \beta \leq 1 $ are real parameters
   	and $ (n+\beta)_{(n)} $ is the decreasing Pochhammer symbol, 
   	\[ 
   	(n+\beta)_{(k)} = (n+\beta ) \ldots (n -k +\beta ) \epnt
   	\] 
   \end{theorem}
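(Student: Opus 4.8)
\emph{Approach.} The plan is to collapse the mixed-order difference quotient to an ordinary first difference of the $n$-th derivative by applying L'H\^opital's rule $n$ times, once I have identified the iterated $\epsilon$-derivative of the Taylor remainder. Writing $g(\epsilon) := f(x+\epsilon) - T_n(x,\epsilon)$, Definitions \ref{def:fracvarn} and \ref{def:frdiff2} give
\[
\fdiffplus{f}{x}{n+\beta} = (n+1)!\ \llim{\epsilon}{0}{ \frac{g(\epsilon)}{\epsilon^{n+\beta}} } \ecma
\]
so everything reduces to evaluating this $0/0$ limit.

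\emph{Step 1 (differentiating the remainder).} First I would show, by induction on the number of differentiations, that $g^{(k)}(\epsilon)$ is again a Taylor remainder --- namely that of $f^{(k)}$ to degree $n-k$ about $x$. A single differentiation in $\epsilon$ replaces $f(x+\epsilon)$ by $f'(x+\epsilon)$, deletes the constant term of $T_n$, and lowers each monomial $\frac{f^{(m)}(x)}{m!}\epsilon^m$ by one degree, which is exactly the remainder of $f'$ to degree $n-1$. Iterating $n$ times annihilates the whole polynomial part except the surviving constant $f^{(n)}(x)$, so that
\[
g^{(n)}(\epsilon) = f^{(n)}(x+\epsilon) - f^{(n)}(x) = \deltaplus{f^{(n)}}{x} \epnt
\]

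\emph{Step 2 (iterated L'H\^opital).} On the denominator side, $\frac{d^k}{d\epsilon^k}\epsilon^{n+\beta} = (n+\beta)_{(k-1)}\,\epsilon^{n+\beta-k}$, and after $n$ steps this is $(n+\beta)(n+\beta-1)\cdots(1+\beta)\,\epsilon^\beta = (n+\beta)_{(n-1)}\,\epsilon^\beta$, the decreasing Pochhammer symbol with its $n$ factors. For each $0\le k<n$ the numerator $g^{(k)}(\epsilon)$ is a Taylor remainder, hence tends to $0$, while the denominator tends to $0$ because $n+\beta-k>0$; thus every intermediate quotient is of the form $0/0$ and the rule is admissible at each stage. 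Feeding Step 1 into the final application and multiplying by $(n+1)!$ produces
\[
\fdiffplus{f}{x}{n+\beta} = \frac{(n+1)!}{(n+\beta)_{(n-1)}}\ \llim{\epsilon}{0}{ \frac{\deltaplus{f^{(n)}}{x}}{\epsilon^\beta} } \ecma
\]
which is the asserted identity.

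\emph{Main obstacle and backward case.} The subtle point is the direction of L'H\^opital's rule: it derives the original limit from the existence of the derivative-ratio limit, so the identity is to be read as guaranteeing that whenever the regularized difference of $f^{(n)}$ on the right converges, the mixed-order velocity on the left exists and equals it; the intermediate $0/0$ verifications are precisely what prevent the rule from manufacturing a spurious value. The backward velocity $\fdiffmin{f}{x}{n+\beta}$ is treated identically on $\tilde g(\epsilon):=T_n(x,-\epsilon)-f(x-\epsilon)$; here each $\epsilon$-differentiation of $f(x-\epsilon)$ contributes a factor $-1$, and the prefactor $(-1)^n$ placed in Definition \ref{def:fracvarn} is exactly what cancels the accumulated sign, leaving $\deltamin{f^{(n)}}{x}$ in the numerator with the same normalization $(n+\beta)_{(n-1)}$.
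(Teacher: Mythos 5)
Your proposal is correct and follows essentially the same route as the paper's proof: both evaluate the $0/0$ limit by applying l'H\^opital's rule $n$ times in $\epsilon$, reducing the numerator to $\deltaplus{f^{(n)}}{x}$ and the denominator to ${(n+\beta)}_{(n-1)}\,\epsilon^{\beta}$, with the $(-1)^n$ prefactor in Definition \ref{def:fracvarn} absorbing the accumulated signs in the backward case. The one substantive difference is how the iterated $\epsilon$-derivative of the numerator is obtained: the paper differentiates the integral form of the Taylor remainder, $R_n(x)=\frac{1}{n!}\int_{x}^{x+\epsilon}(\epsilon+x-t)^n f^{(n+1)}(t)\,dt$, via the recursion $\left(R_n\right)'_{\epsilon}=n\,R_{n-1}$, whereas you differentiate $f(x+\epsilon)-T_n(x,\epsilon)$ directly and note that it remains the Taylor remainder of $f^{(k)}$ of degree $n-k$ after $k$ differentiations. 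Your variant is in fact better matched to the stated hypothesis: the integral remainder presupposes the existence (and integrability) of $f^{(n+1)}$, which is more than $f\in\fclass{C}{n}$, while your term-by-term computation, together with the explicit $0/0$ verification at each intermediate stage and the correct one-directional reading of l'H\^opital's rule, uses only $f\in\fclass{C}{n}$.
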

   Let's consider the point  $x \in   [x, x+\epsilon]$.
   \begin{proof}
   	\begin{description}
   		\item[Forward case]
   		The equation can be rearranged in the form
   		\[
   		f(x+ \epsilon) - T_n (x, \epsilon) = R_n(x) \ecma
   		\]
   		where $R_n(x) $ is the Taylor reminder. 
   		
   		Then we have to evaluate the limit
   		\[
   		\llim{\epsilon}{0}{}\frac{R_n(x)}{\epsilon^{n+\beta}} \epnt
   		\]
   		The evaluation of the limit can be performed by applying the l'H\^opital's rule \textit{n} times.
   		The Taylor reminder can be written in the form:
   		\[
   		R_n(x)  =\frac{1}{n!}\int\limits_{x}^{x+\epsilon} {\left(  \epsilon+x -t\right)  }^n\, 
   		\frac{{d}^{\, n+1}}{d{t}^{n+1}}\,f\left( t \right)   dt \epnt
   		\]
   		Then
   		\[
   		\left( R_n(x) \right) ^\prime_\epsilon = 
   		\frac{n}{n!}\,\int_{x}^{x+\epsilon}  \frac{{d}^{\, n+1}}{d{t}^{\, n+1}}f\left( t\right)   \,{\left( x-t+\epsilon\right) }^{n-1}dt = n \; R_{n-1} (x) \ecma
   		\]
   		where the index change in $R_n$ does not affect the derivative of $f(t)$.
   		
   		Therefore, the integral can be evaluated by induction to 
   		\[
   		R_1= \int_{x}^{x+\epsilon}\frac{{d}^{n+1}}{d{t}^{n+1}}\,f\left( t\right) dt =
   		\left.  f^{(n)}(t)\right|^{x+\epsilon}_x = \deltaplus{f^{(n)}}{x} 
   		\]
   		The denominator can be evaluated by \textit{n}-times differentiation about $\epsilon$ in a similar way to give
   		\[
   		\left( \epsilon^{n+\beta} \right) ^{(n)}_\epsilon = (n +\beta) (n -1 +\beta) \ldots \, (1+\beta)   \epsilon^{\beta} =    {(n+\beta)}_{(n-1)} \; \epsilon^{\beta}
   		\]
   		where we denote conveniently the decreasing Pochhammer symbol,
   		$x_{(n)}:= x \; (x-1) \ldots (x-n+1)$.
   		Therefore, finally
   		\[
   		\llim{\epsilon}{0}{}\frac{R_n(x)}{\epsilon^{n+\beta}} = \llim{\epsilon}{0}{}
   		\frac{\deltaplus{f^{(n)}}{x} }{{(n+\beta)}_{(n)}  \epsilon^{\beta}}
   		\]
   		
   		\item[Backward case]
   		The equation can be rearranged in the form
   		\[
   		T_n (x, -\epsilon) -f(x - \epsilon)  = - R_n(x) \epnt
   		\]
   		The Taylor reminder can be written in the form:
   		\[
   		R_n(x)  =  \frac{1}{n!}\int\limits_{x}^{x - \epsilon} {\left(  - \epsilon+x -t\right)  }^n\, 
   		\frac{{d}^{n+1}}{d{t}^{n+1}}\,\mathrm{f}\left( t \right)   dt
   		\]
   		Then we have to evaluate the limit
   		\[
   		\llim{\epsilon}{0}{} -\frac{R_n(x)}{\epsilon^{n+\beta}} \epnt
   		\]
   		Applying again  l'H\^opital's rule to the numerator yields 
   		\[
   		\left[  R_n(x) \right]^\prime_\epsilon = -n \; R_{n-1}(x)
   		\]
   		using the same convention. Therefore, 
   		\[
   		\left[ R_n(x) \right] ^{(n)}_\epsilon = - (-1)^{n} \deltamin{f^{(n)}}{x} \epnt
   		\]
   		The expression for the denominator is identical to the forward case.
   	\end{description}
   \end{proof}

   \begin{proposition}
   	\label{corr:mixed}
   	For a function $f(x) \in \holder{n+\beta}$ all mixed order fractional velocities exist up to the order $n+\beta$.
   \end{proposition}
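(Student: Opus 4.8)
The plan is to extract the two pieces of data carried by the hypothesis $f \in \holder{n+\beta}$ and then to test the fractional variation of every admissible order against them. First I would observe that, since $n+\beta > n$, the extended Hölder condition of Definition~\ref{def:holder} forces $f \in \fclass{C}{n}$, and that matching powers of $\epsilon$ (dividing the Hölder inequality successively by $\epsilon, \epsilon^2, \ldots$ and letting $\epsilon \to 0$) shows that $P_n$ must agree with the Taylor part of $f$ at the base point. Thus the hypothesis is nothing but the two-sided Taylor-remainder bound
\[
\left| f(x \pm \epsilon) - T_n(x, \pm \epsilon) \right| \leq C\, \epsilon^{\,n+\beta} \ecma
\]
which is exactly a bound on the numerator appearing in Definition~\ref{def:fracvarn}.

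Next I would fix an arbitrary admissible order $m+\gamma$ with $0 \le m \le n$, $0 < \gamma \le 1$ and $m+\gamma \le n+\beta$, and split the numerator of $\fracvarplus{f}{x}{m+\gamma}$ through the higher Taylor polynomial,
\[
f(x+\epsilon) - T_m(x,\epsilon) = \left( f(x+\epsilon) - T_n(x,\epsilon) \right) + \sum_{k=m+1}^{n} \frac{f^{(k)}(x)}{k!}\, \epsilon^{k} \epnt
\]
Dividing by $\epsilon^{m+\gamma}$ and multiplying by $(m+1)!$ bounds the first bracket by $(m+1)!\,C\,\epsilon^{\,n+\beta-m-\gamma}$ and turns each summand into a constant multiple of $\epsilon^{\,k-m-\gamma}$; the backward variation is handled identically through $T_n(x,-\epsilon)$.

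The argument then splits into two regimes. For every order strictly below the critical one, $m+\gamma < n+\beta$, the first term carries a strictly positive power of $\epsilon$, while each remaining exponent satisfies $k-m-\gamma \ge 1-\gamma \ge 0$, strictly positive whenever $\gamma<1$; hence for a strictly fractional order all contributions vanish and $\fdiffpm{f}{x}{m+\gamma}=0$, whereas the boundary exponents occurring at $\gamma=1$ merely reproduce the ordinary derivative $f^{(m+1)}(x)$, which exists because $f \in \fclass{C}{n}$. At the critical order $m=n$, $\gamma=\beta$ the sum is empty, so $\fracvarplus{f}{x}{n+\beta}=(n+1)!\,\big(f(x+\epsilon)-T_n(x,\epsilon)\big)/\epsilon^{\,n+\beta}$ stays bounded in modulus by $(n+1)!\,C$; to identify the limit itself I would invoke Theorem~\ref{th:modvar}, which rewrites this velocity as a fixed multiple of $\llim{\epsilon}{0}{\deltaplus{f^{(n)}}{x}/\epsilon^{\beta}}$, i.e. as the order-$\beta$ velocity of $f^{(n)}$, whose control is precisely the $\beta$-Hölder information built into the hypothesis.

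The delicate step is the critical order $n+\beta$. For the subcritical orders boundedness and convergence coincide because the estimates are strict, but at the endpoint the Hölder inequality only furnishes boundedness of the fractional variation, not a priori convergence of the limit. This is where the reduction of Theorem~\ref{th:modvar} does the work, trading the $(n+\beta)$-velocity of $f$ for the $\beta$-velocity of $f^{(n)}$; finiteness at this order is then immediate, and it is exactly in this sense that all velocities exist up to the order $n+\beta$, a genuine divergence first becoming possible once the order exceeds $n+\beta$.
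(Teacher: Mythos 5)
Your treatment of the subcritical orders is sound, and it is already more than the paper supplies: the paper's entire justification of this proposition is the single sentence that it ``follows directly from Definition~\ref{def:holder}'', and your decomposition of $f(x+\epsilon)-T_m(x,\epsilon)$ through the higher polynomial $T_n$ is essentially the argument the paper records separately, for $\fclass{C}{n}$ functions, in Corollary~\ref{cor:unorder}. The genuine gap is at the critical order $n+\beta$, and you have seen it yourself but then talked your way past it. The H\"older inequality gives only
\[
\left| \fracvarplus{f}{x}{n+\beta} \right| \leq (n+1)!\, C \ecma
\]
that is, boundedness of the fractional variation; boundedness of a difference quotient does not imply convergence, and convergence of the limit is what Definition~\ref{def:frdiff2} requires for the velocity to \emph{exist}. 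Your escape route --- invoking Theorem~\ref{th:modvar} to trade the $(n+\beta)$-velocity of $f$ for the $\beta$-velocity of $f^{(n)}$ --- does not close this gap: that theorem is proved via l'H\^opital's rule, so it merely re-expresses one limit as another and is only informative when the transformed limit exists, and the $\beta$-H\"older control on $f^{(n)}$ again yields boundedness rather than convergence.

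Concretely, in the case $n=0$ the function $f(x)=x^{\beta}\sin(\ln x)$ for $x>0$ with $f(0)=0$ satisfies the H\"older-$\beta$ condition at the origin, yet $\fracvarplus{f}{0}{\beta}=\sin(\ln\epsilon)$ oscillates and has no limit as $\epsilon\to 0$. So either the proposition is to be read as asserting boundedness of the variations up to order $n+\beta$ --- in which case your estimate $(n+1)!\,C$ already finishes the proof and the detour through Theorem~\ref{th:modvar} is superfluous --- or it asserts existence of the limits, in which case an extra hypothesis is needed and your closing sentence, which quietly substitutes ``finite'' for ``convergent'', is exactly where the argument breaks. To be fair, the paper's one-line proof has the same defect; you at least made it visible.
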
	
   Indeed, the proposition follows directly from Definition \ref{def:holder} for mixed order H\"older functions.
   
   \begin{corollary}[Continuous variation]
   	\label{th:fdiff1}
   	If $f^{(n)}(x)$  is continuous in the interval $x \in [x, x+ \epsilon]$ and
   	$f^{(n+1)}(x)$  is continuous in the interval $x \in (x, x+ \epsilon]$	then 
   	\begin{align}
   		\label{eq:fracdiff2}
   		\fdiffplus {f}{x}{n+\beta} &=  \frac{(n+1)!}{{(n+\beta)}_{(n)} } \llim{\epsilon}{0}{ \epsilon ^{1-\beta}  f ^{(n +1)}(x + \epsilon)  }   \ecma
   		\\
   		\fdiffmin {f}{x}{n+\beta} &=   \frac{(n+1)!}{{(n+\beta)}_{(n)} }  \llim{\epsilon}{0}{ \epsilon ^{1-\beta}  f ^{(n +1)}(x - \epsilon)  }   \epnt
   	\end{align}
   	where $ (n+\beta)_{(n)} $ denotes the decreasing Pochhammer symbol.
   \end{corollary}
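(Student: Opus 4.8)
The plan is to obtain the stated formulas from Theorem \ref{th:modvar} by a single further application of l'H\^opital's rule to the limit that the theorem leaves standing. Theorem \ref{th:modvar} already reduces the velocity to
\[
\fdiffplus {f}{x}{n+\beta} = \frac{(n+1)!}{(n+\beta)_{(n-1)}} \llim{\epsilon}{0}{ \frac{ \deltaplus{f^{(n)}}{x} }{ \epsilon^{\beta} } } \ecma
\]
so the whole task is to re-express $\llim{\epsilon}{0}{ \deltaplus{f^{(n)}}{x} / \epsilon^{\beta} }$ through the derivative $f^{(n+1)}$ evaluated on the open side of the interval.

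First I would check that the surviving quotient is genuinely indeterminate. The hypothesis that $f^{(n)}$ is continuous on the closed interval $[x, x+\epsilon]$ forces the numerator $\deltaplus{f^{(n)}}{x} = f^{(n)}(x+\epsilon) - f^{(n)}(x)$ to vanish as $\epsilon \to 0$, while the denominator $\epsilon^{\beta} \to 0$ since $\beta > 0$; hence the ratio is of the form $0/0$. Differentiating numerator and denominator with respect to $\epsilon$ — legitimate because $f^{(n+1)}$ exists and is continuous on the half-open interval $(x, x+\epsilon]$, because $f^{(n)}(x)$ is constant in $\epsilon$, and because $(\epsilon^{\beta})' = \beta \epsilon^{\beta - 1}$ does not vanish for $\epsilon > 0$ — gives
\[
\llim{\epsilon}{0}{ \frac{ \deltaplus{f^{(n)}}{x} }{ \epsilon^{\beta} } } = \llim{\epsilon}{0}{ \frac{ f^{(n+1)}(x+\epsilon) }{ \beta\, \epsilon^{\beta-1} } } = \frac{1}{\beta} \llim{\epsilon}{0}{ \epsilon^{1-\beta} f^{(n+1)}(x+\epsilon) } \epnt
\]
Substituting this back into the theorem and using the Pochhammer recursion $\beta \, (n+\beta)_{(n-1)} = (n+\beta)_{(n)}$ (in the convention of Theorem \ref{th:modvar}, where lowering the index by one appends the trailing factor $\beta$), the spare $1/\beta$ is exactly absorbed into the coefficient, turning $(n+\beta)_{(n-1)}$ into $(n+\beta)_{(n)}$ and producing the forward formula.

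The backward statement follows \emph{mutatis mutandis}: one runs the argument on the reflected interval, so that $\deltamin{f^{(n)}}{x} = f^{(n)}(x) - f^{(n)}(x-\epsilon)$ again yields a $0/0$ quotient whose l'H\^opital derivative introduces $f^{(n+1)}(x-\epsilon)$ together with the same factor $1/\beta$. The main point to watch, and the step I expect to be most delicate, is the deliberate asymmetry of the continuity hypotheses: $f^{(n)}$ must be continuous up to and including the base point $x$ so that the numerator tends to zero, whereas $f^{(n+1)}$ is required only on the open side $(x, x+\epsilon]$. This asymmetry is precisely what keeps the argument valid for the intended applications, in which $f^{(n+1)}(x)$ itself may fail to exist or may blow up, yet $\llim{\epsilon}{0}{ \epsilon^{1-\beta} f^{(n+1)}(x+\epsilon) }$ remains finite; accordingly the equality is to be read as holding whenever this one-sided limit exists, which is exactly what l'H\^opital's rule delivers.
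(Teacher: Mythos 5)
Your argument is correct and is essentially the paper's own proof: the paper likewise starts from Theorem \ref{th:modvar} and applies one further instance of l'H\^opital's rule (quoting the $n=0$ case from prior work), with the resulting factor $1/\beta$ absorbed into the Pochhammer coefficient exactly as you describe. You have merely written out explicitly the $0/0$ verification and the one-sided continuity bookkeeping that the paper leaves implicit.
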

   \begin{proof}
   	Previous work has established  \cite{Prodanov2015} that if the ordinary derivative 
   	$f^{\prime}(x)$  is continuous in the interval $x \in (x, \ x+ \epsilon]$ then the following equations hold
   	
   	\begin{flalign}
   		\label{eq:fdiffsimp}
   		\fdiffplus {f}{x}{\beta} =\frac{1}{\beta} \llim{\epsilon}{0}{ \epsilon ^{1-\beta}  f ^{\prime}(x + \epsilon)  }   \ecma
   		\\
   		\fdiffmin {f}{x}{\beta} = \frac{1}{\beta} \llim{\epsilon}{0}{ \epsilon ^{1-\beta}  f ^{\prime}(x - \epsilon)  }   \epnt
   	\end{flalign}
   	Application of l'H\^opital's rule to Theorem \ref{th:modvar} demonstrates the desired result.
   \end{proof}
   
 
\begin{corollary}
	\label{cor:unorder}
	Let $f(x) \in \fclass{C}{n} $ in the interval $ x \in [x, \; x+\epsilon ]$. 
	Then the non-integer order velocities below the critical order $n+\beta$ vanish.
\end{corollary}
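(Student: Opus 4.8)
The plan is to reduce the fractional velocity of an arbitrary sub-critical non-integer order to one of the forms already established and then to read off the vanishing from the power of $\epsilon$ that survives in the limit.

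First I would fix a non-integer order $m+\gamma$ with $\gamma \in (0,1)$ and $m+\gamma < n+\beta$; since $\beta \le 1$ and $\gamma>0$ this forces $m \le n$, so the integer part ranges only over $\{0,\ldots,n\}$. By Theorem~\ref{th:modvar} the two one-sided velocities of this order equal, up to the constant $(m+1)!/(m+\gamma)_{(m-1)}$, the limits of $\deltaplus{f^{(m)}}{x}/\epsilon^{\gamma}$ and $\deltamin{f^{(m)}}{x}/\epsilon^{\gamma}$, so the whole question becomes the rate at which the increment of $f^{(m)}$ decays.

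For the generic range $m<n$ I would apply Corollary~\ref{th:fdiff1} with $n$ replaced by $m$: because $m+1\le n$, the derivative $f^{(m+1)}$ exists and is continuous on the interval, so each velocity equals $\frac{(m+1)!}{(m+\gamma)_{(m)}}\,\llim{\epsilon}{0}{\epsilon^{1-\gamma}\,f^{(m+1)}(x\pm\epsilon)}$. The factor $f^{(m+1)}(x\pm\epsilon)$ stays bounded by continuity, while $\epsilon^{1-\gamma}\to 0$ because $\gamma<1$; hence both one-sided velocities vanish. This disposes at once of every non-integer order strictly below $n$.

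The remaining and genuinely delicate range is $m=n$, that is orders $n+\gamma$ with $0<\gamma<\beta$. Here $f^{(n+1)}$ need not exist, so Corollary~\ref{th:fdiff1} is unavailable and I must estimate $\deltaplus{f^{(n)}}{x}$ directly from the modulus of continuity of the top derivative. Invoking the H\"older regularity of $f^{(n)}$ implied by the critical order, $|f^{(n)}(x+\epsilon)-f^{(n)}(x)|\le C\,\epsilon^{\beta}$, Theorem~\ref{th:modvar} bounds the velocity by a term of order $\epsilon^{\beta-\gamma}$, which tends to zero precisely because $\gamma<\beta$; the backward expression is handled identically. I expect this boundary estimate to be the main obstacle, as it is the only step that cannot be settled by differentiating once more and instead rests on the H\"older modulus of $f^{(n)}$ rather than on l'H\^opital's rule.
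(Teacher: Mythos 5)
Your argument for orders with integer part $m<n$ coincides with the paper's own proof: both reduce the velocity via Corollary~\ref{th:fdiff1} to $\frac{(m+1)!}{(m+\gamma)_{(m)}}\lim_{\epsilon\to 0}\epsilon^{1-\gamma}f^{(m+1)}(x\pm\epsilon)$ and conclude from the boundedness of the continuous derivative $f^{(m+1)}$ together with $1-\gamma>0$. Where you genuinely depart from the paper is the boundary range $m=n$, $0<\gamma<\beta$: the paper's proof never treats these orders (it only considers $m+1\le n$, hence orders strictly below $n$, and then asserts the conclusion for all orders below $n+\beta$), whereas you handle them by estimating $\Delta_{\epsilon}^{\pm}[f^{(n)}](x)$ through the H\"older modulus of $f^{(n)}$ and Theorem~\ref{th:modvar}, obtaining a bound of order $\epsilon^{\beta-\gamma}\to 0$. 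That extra step is the right idea and is correctly identified as the one place where l'H\^opital is unavailable, but be aware that it needs more than the stated hypothesis $f\in\fclass{C}{n}$: mere continuity of $f^{(n)}$ does not yield $|\Delta_{\epsilon}^{+}[f^{(n)}](x)|\le C\epsilon^{\beta}$, and without some such modulus the conclusion can actually fail for orders in $(n,\,n+\beta)$ (take $f^{(n)}$ behaving like $|t-x|^{\gamma/2}$ near $x$). In effect you are reading the corollary's phrase ``critical order $n+\beta$'' as an implicit $\holder{n+\beta}$ assumption (as in Proposition~\ref{corr:mixed}); under that reading your proof is complete and covers strictly more than the argument the paper gives, while agreeing with it on the range the paper actually addresses.
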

\begin{proof}
	Let's calculate the fractional velocity of the mixed order $m +1\leq n$. By  Corollary \ref{th:fdiff1}
	\[
	\fdiffplus {f}{x}{m+\beta} =  \frac{(m+1)!}{{(m+\alpha)}_{(m)} } \llim{\epsilon}{0}{ \epsilon ^{1-\beta}  f ^{(m +1)}(x + \epsilon)  } \ecma
	\]
	where $\beta < 1$.
	Since by hypothesis $f ^{(n)}(x)$ is finite everywhere in the interval the last limit is zero.
	Therefore all fractional velocities of order $m+\beta < n$ vanish.
	The same argument can be applied to the backward fractional velocity. 
\end{proof}

\begin{remark}
	\label{rem:diff1}
	Following the notation of \cite{Cresson2005}, \cite{Cresson2011} we define the usual forward and backward differential maps as:
	\begin{flalign*}
	\diffplus{f}{x}{\epsilon} :=\frac{f(x + \epsilon)-f(x)}{\epsilon} \\
	\diffmin{f}{x}{\epsilon} :=\frac{f(x)- f(x - \epsilon)}{\epsilon}
	\end{flalign*}
	These operators can be considered as particular cases of the Fractal variation  according to Def. \ref{def:fracvar}.
\end{remark}
    \begin{theorem}[Fractional Rolle]
    	\label{th:frrolle}
    	Let $f(x) \in \holder{\beta}$ in $ \left[ a, b\right] $ and $f(a) = f(b)$ then
    	there exists a number $c \in \left[a, b \right]  $ such that 
    	$ \fracvarplus{f}{c}{\beta} \leq 0 $ and $ \fracvarmin{f}{c}{\beta} \geq 0 \,$.
    	Respectively,
    	$\fdiffmin{f}{c}{\beta}\leq 0$ and $\fdiffplus{f}{c}{\beta}\geq 0 \,$.
    	If both variations agree then
    	$\fdiffmin{f}{c}{\beta}  =\fdiffplus{f}{c}{\beta}=0 \,$.
    \end{theorem}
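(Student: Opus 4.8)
The plan is to mirror the classical proof of Rolle's theorem, replacing the ordinary derivative by the one-sided fractal variations of Definition \ref{def:fracvar}. First I would note that membership in $\holder{\beta}$ forces $f$ to be continuous on the compact interval $[a,b]$, so by the extreme value theorem $f$ attains both a global maximum and a global minimum there. The degenerate case is disposed of immediately: if $f$ is constant, then $\deltaplus{f}{c}=\deltamin{f}{c}=0$ for every $c$ and every $\epsilon$, so all the variations and all the velocities vanish identically and any $c$ serves.

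Assuming $f$ is non-constant, the hypothesis $f(a)=f(b)$ guarantees that at least one of the two global extrema is attained at an interior point $c\in(a,b)$. I would take $c$ to be such an interior maximum, the interior minimum being handled by the symmetric argument with reversed inequalities. Since $c$ is interior and is in fact a \emph{global} maximum on $[a,b]$, for every admissible $\epsilon>0$ (i.e. with $c\pm\epsilon\in[a,b]$, in particular for all small $\epsilon$) maximality gives $f(c+\epsilon)\le f(c)$ and $f(c-\epsilon)\le f(c)$. In the notation of Definition \ref{def:deltas} this reads $\deltaplus{f}{c}\le 0$ and $\deltamin{f}{c}\ge 0$. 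Dividing by the positive quantity $\epsilon^{\beta}$ yields exactly $\fracvarplus{f}{c}{\beta}\le 0$ and $\fracvarmin{f}{c}{\beta}\ge 0$, which is the first assertion.

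To obtain the velocity statement I would let $\epsilon\to 0$ in these inequalities: whenever the defining limits of Definition \ref{def:frdiff2} exist, the non-strict signs are preserved in the limit, so the one-sided velocities inherit the corresponding one-sided bounds. For the final claim it then suffices to observe that the two one-sided velocities $\fdiffplus{f}{c}{\beta}$ and $\fdiffmin{f}{c}{\beta}$ satisfy opposite bounds, one being $\le 0$ and the other $\ge 0$; hence if they agree at $c$, their common value is simultaneously $\le 0$ and $\ge 0$ and must equal $0$.

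The main obstacle I anticipate is not the bookkeeping of signs but the passage from the always-defined variations to the velocities. Hölder continuity of exponent $\beta$ only bounds the one-sided quotients $\fracvarplus{f}{c}{\beta}$ and $\fracvarmin{f}{c}{\beta}$ for small $\epsilon$; it does not by itself force the difference quotient to converge. I would therefore state the velocity inequalities under the proviso that the one-sided velocities exist at $c$ (equivalently, read them through the $\liminf$/$\limsup$ of the variations), while the sharper variation inequalities hold unconditionally at the interior extremum.
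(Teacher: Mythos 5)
Your proposal is correct and follows essentially the same route as the paper: the extreme value theorem, the split into the constant (both extrema at endpoints) case and the interior extremum case, the sign analysis $\deltaplus{f}{c}\le 0$, $\deltamin{f}{c}\ge 0$ at an interior maximum, division by $\epsilon^{\beta}>0$, and passage to the limit. Two small points in your favour: you correctly obtain $\fdiffplus{f}{c}{\beta}\le 0$ and $\fdiffmin{f}{c}{\beta}\ge 0$, which matches the paper's own proof (the ``Respectively'' clause of the theorem statement appears to have the two velocities transposed), and your explicit caveat that the velocities need only be read as $\liminf$/$\limsup$ unless they are assumed to exist is a genuine precision that the paper's proof leaves implicit.
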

    \begin{proof}
    The proof of the theorem follows closely the proof of the generalized Rolle's Theorem.
    We can distinguish two cases:	
    	\begin{description}
    		\item[End-interval case] 
    		
    		Since $f(x)$ is continuous on $ \left[ a, b\right] $, it follows from the continuity property that $f(x)$ attains a maximum \textit{M} at some $c_1 \in  \left[ a, b\right] $ and a minimum \textit{m} at some $c_2 \in  \left[ a, b\right] $.
    		
    		Suppose $c_1$ and $c_2$ are both endpoints of $ \left[ a, b\right] $.
    		Because $f(a)=f(b)$ it follows that $m=M$ and so $f(x)$ is constant on $ \left[ a, b\right] $.
    		Hence $ \fracvarplus{f}{x}{\beta} = \fracvarmin{f}{x}{\beta} =0, \ \forall x \in \left[ a, b\right]$.
    		\item[Interior case]
    		Suppose then that the maximum is obtained at an interior point \textit{c} of $\left( a,b \right)$.
    		For an $\epsilon>0\,$, such that $c + \epsilon \in \left[ a, b\right]$  $f(c+\epsilon) \leq f(c)$ by assumption. 
    		Therefore, $ \fracvarplus{f}{c}{\beta} \leq 0$. 
    		Therefore, $\fdiffplus{f}{c}{\beta} \leq 0 \,$.
    		Similarly, for $c - \epsilon \in \left[ a, b\right]$ we have $\deltamin{f}{c} \geq 0 $ and
    		$ \fracvarmin{f}{c}{\beta} \geq 0$.
    		Therefore, $\fdiffmin{f}{c}{\beta} \geq 0 \,$.
    		Finally, if $\fdiffmin{f}{c}{\beta} =\fdiffplus{f}{c}{\beta}$ then both equal $0$.
    		The proof for the minimum follows identical reasoning. 
    	\end{description}
    \end{proof}

 \subsection{Bounds of variation}
  \label{sec:varbounds}
  \begin{lemma}[Bounds of forward variation]
   	\label{th:bondvar1}
   	Let $f(x) \in \holder{\alpha}$ in $[x, x+ \epsilon]$,  $\fdiffplus {f}{x}{\beta} \neq0$ and
   	$C_x$ and $C^{\prime}_{x}$ be constants such that 
   	i)  $C$ is the smallest number for which $|\Delta_{\epsilon}^{+}[f] (x)| \leq C \epsilon^\beta$ still holds, 	that is
   	$C_x =  \underset{C}{\inf} \{  |\Delta_{\epsilon}^{+}[f] (x)| \leq C \epsilon^\beta\}$ 	and 
   	ii) $C^{\prime}_x$ is the largest number $C$ for which  $ |\Delta_{\epsilon}^{+}[f] (x)| \geq C \epsilon^\beta$
   	still holds, that is 
   	$C^{\prime}_{x} =  \underset{C}{\sup} \{  |\Delta_{\epsilon}^{+}[f] (x) | \geq C \epsilon^\beta\}$
   	then
   	\[
   	|\fdiffplus {f}{x}{\beta} | = C_x =C^{\prime}_{x}
   	\]
   \end{lemma}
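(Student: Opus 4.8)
The plan is to recognize that the two constants $C_x$ and $C^\prime_x$ are nothing but the one-sided $\limsup$ and $\liminf$, respectively, of the modulus of the fractional variation as $\epsilon \to 0^+$, and that writing $\fdiffplus{f}{x}{\beta} \neq 0$ presupposes that the defining limit exists as a finite nonzero value. First I would set $v := |\fdiffplus{f}{x}{\beta}|$ and observe that, since the absolute value is continuous, $v = \llim{\epsilon}{0}{ |\fracvarplus{f}{x}{\beta}| } = \llim{\epsilon}{0}{ |\deltaplus{f}{x}|/\epsilon^\beta }$; thus the quotient $g(\epsilon) := |\deltaplus{f}{x}|/\epsilon^\beta$ converges to $v$ as $\epsilon \to 0^+$. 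Everything then reduces to unwinding this convergence into the two extremal conditions.

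Next I would treat the upper constant. Fix $\eta > 0$; convergence supplies a $\delta$ with $g(\epsilon) < v + \eta$, i.e. $|\deltaplus{f}{x}| \le (v+\eta)\epsilon^\beta$, for all $0 < \epsilon < \delta$, so $v + \eta$ is an admissible bounding constant and $C_x \le v + \eta$; letting $\eta \to 0$ gives $C_x \le v$. Conversely any $C < v$ is excluded: with $\eta = v - C$ one has $g(\epsilon) > C$ for all small $\epsilon$, so $|\deltaplus{f}{x}| \le C\epsilon^\beta$ fails to hold and $C$ is not admissible, whence $C_x \ge v$. Together these give $C_x = v$.

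The argument for $C^\prime_x$ is the mirror image. For fixed $\eta>0$ the inequality $g(\epsilon) > v - \eta$ for small $\epsilon$ shows that $v - \eta$ is an admissible lower constant, so $C^\prime_x \ge v - \eta$ and hence $C^\prime_x \ge v$; while any $C > v$ is excluded, since eventually $g(\epsilon) < C$, contradicting $|\deltaplus{f}{x}| \ge C\epsilon^\beta$, so $C^\prime_x \le v$. Combining the two halves yields $C^\prime_x = v = C_x = |\fdiffplus{f}{x}{\beta}|$, which is the assertion.

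I do not expect a genuine obstacle in the estimates, which are a routine translation of the definition of a limit. The one point requiring care — and the real crux — is the correct reading of the two extremal conditions: the bounds $|\deltaplus{f}{x}| \le C\epsilon^\beta$ and $|\deltaplus{f}{x}| \ge C\epsilon^\beta$ must be understood as holding for all \emph{sufficiently small} $\epsilon$, so that $C_x$ and $C^\prime_x$ are precisely the $\limsup$ and $\liminf$ of $g$. It is exactly because the full limit $\fdiffplus{f}{x}{\beta}$ is assumed to exist that these two quantities collapse to the common value $v$; the non-vanishing hypothesis then guarantees that this common value is strictly positive, so the characterization is non-degenerate.
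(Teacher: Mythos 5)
Your proof is correct, and it takes a cleaner route than the paper's. You identify $C_x$ and $C^{\prime}_{x}$ outright as the $\limsup$ and $\liminf$ of $g(\epsilon) = |\deltaplus{f}{x}|/\epsilon^{\beta}$ as $\epsilon \to 0^{+}$, and then simply observe that the assumed existence of the limit $\fdiffplus{f}{x}{\beta}$ forces both to equal $v = |\fdiffplus{f}{x}{\beta}|$; the two inequalities $C_x \le v$ and $C_x \ge v$ (and their mirrors for $C^{\prime}_x$) are routine $\eta$--$\delta$ translations. The paper instead starts from the two-sided H\"older bound $C^{\prime}_{x}\epsilon^{\beta} \le |\deltaplus{f}{x}| \le C_x \epsilon^{\beta}$, subtracts $a\epsilon^{\beta}$ with $a = \fdiffplus{f}{x}{\beta}$, divides by $\epsilon^{\beta}$, and splits into cases according to the sign of $\deltaplus{f}{x}$, obtaining $C_x = a$ in one case and $C_x = -a$ in the other (via a limit evaluating to $-2a$), hence $C_x = |a|$. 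Your version buys two things: by passing to $|{\cdot}|$ at the outset (legitimate by continuity of the absolute value) you avoid the sign split entirely, and you avoid the paper's somewhat awkward expressions of the form $\llim{\epsilon}{0}{C_x}$ applied to a quantity that was declared constant in $\epsilon$. What the paper's route buys is that it stays closer to the H\"older inequality itself, making visible where the hypothesis $f \in \holder{\alpha}$ enters. The one point you rightly flag as the crux --- that the extremal conditions defining $C_x$ and $C^{\prime}_{x}$ must be read as holding for all sufficiently small $\epsilon$ --- is also implicit (and unexamined) in the paper's argument, so your making it explicit is an improvement rather than a deviation.
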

   \begin{remark}
   	We will assume that $\fdiffplus {f}{x}{\beta}$ is bounded in $[x, x+ \epsilon]$.
   \end{remark}
   \begin{proof}
   	We fix $x$ so that $C_x$ remains constant. Let $ \fdiffplus {f}{x}{\beta} =a$ for a certain bounded \textit{a}.
   	From the definition of a H\"{o}lder function it follows that $ |\Delta_{\epsilon}^{+}[f] (x) | \leq C_x \epsilon^\beta$ then we subtract $ a \epsilon^\beta $ from both sides of the inequality to obtain
   	\[
   	\left( C^{\prime}_{x} -a \right) \epsilon^\beta \leq	|\Delta_{\epsilon}^{+}[f] (x) | -a \epsilon^\beta \leq \left( C_x -a \right) \epsilon^\beta \epnt
   	\]
   	Further division by the positive quantity $  \epsilon^\beta$ results in 
   	\[
   	C^{\prime}_{x} -a  \leq \frac{|\Delta_{\epsilon}^{+}[f] (x) | -a \epsilon^\beta}{\epsilon^\beta} \leq   C_x -a  \epnt
   	\]
   	We have to further consider two cases.
   	If $\Delta_{\epsilon}^{+}[f] (x) \geq 0 $  then taking the limit both sides gives
   	\[
   	\llim{\epsilon}{0}{ \frac{\Delta_{\epsilon}^{+}[f] (x)  -a \epsilon^\beta}{\epsilon^\beta} } =0
   	\] 
   	and
   	$ \llim{\epsilon}{0}{ C_x -a} \geq 0$ but since $C_x$ is an infimum then $ \llim{\epsilon}{0}{ C_x  } = a$.
   	Since $C$ is constant with regard to $\epsilon$ it follows that $C_x =a$.
   	On the other hand, if $\Delta_{\epsilon}^{+}[f] (x) \leq 0 $ then
   	\[
   	\llim{\epsilon}{0}{ \frac{ - \Delta_{\epsilon}^{+}[f] (x)  -a \epsilon^\beta}{\epsilon^\beta} } = - 2 a
   	\] and
   	$ \llim{\epsilon}{0}{ C_x -a} \geq -2 a$, therefore $ \llim{\epsilon}{0}{ C_x } \geq -a$ and the same reasoning as in the previous case applies to yield $C_x= - a$.
   	Therefore, finally $C_x =|a|$.
   	The case for $C^{\prime}_{x}$ can be derived using identical reasoning. 
   \end{proof}

\begin{lemma}[Bounds of backward variation]
	\label{th:bondvar2}
	Let $f(x) \in \holder{\alpha}$ in $[x, x+ \epsilon]$,  $\fdiffmin {f}{x}{\beta} \neq0$ and
	$C_x$ and $C^{\prime}_{x}$ be constants such that 
	i)  $C$ is the smallest number $C$ for which $|\Delta_{\epsilon}^{-}[f] (x)| \leq C \epsilon^\beta$ still holds, 	that is
	$C_x =  \underset{C}{\inf} \{  |\Delta_{\epsilon}^{-}[f] (x)| \leq C \epsilon^\beta\}$ 	and 
	ii) $C^{\prime}_x$ is the largest number $C$ for which  $ |\Delta_{\epsilon}^{-}[f] (x)| \geq C \epsilon^\beta$
	still holds, that is 
	$C^{\prime}_{x} =  \underset{C}{\sup} \{  |\Delta_{\epsilon}^{-}[f] (x) | \geq C \epsilon^\beta\}$
	then
	\[
	|\fdiffmin {f}{x}{\beta} | = C_x =C^{\prime}_{x}
	\]
\end{lemma}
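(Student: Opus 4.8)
The plan is to reproduce the argument of Lemma~\ref{th:bondvar1} almost verbatim, exploiting the structural symmetry between the forward and backward increments: wherever the forward proof invokes $\deltaplus{f}{x}$ and $\fdiffplus{f}{x}{\beta}$, I would substitute the backward increment $\deltamin{f}{x} = f(x) - f(x-\epsilon)$ and the backward velocity $\fdiffmin{f}{x}{\beta}$. Since both difference quotients have the common shape (increment)$/\epsilon^\beta$, every estimate transfers without any change of form. Before starting I would note that the bound actually needed is $|\deltamin{f}{x}| \le C \epsilon^\beta$, which is furnished by the H\"older continuity of $f$ on the left neighborhood $[x-\epsilon,\, x]$ of $x$; this is the natural counterpart of the forward hypothesis and is the only place where the interval differs.

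First I would fix $x$, so that the extremal constants $C_x$ and $C^{\prime}_{x}$ are held constant with respect to $\epsilon$, and set $a := \fdiffmin{f}{x}{\beta}$, which is bounded by assumption. By the definition of $C_x$ as an infimum and $C^{\prime}_{x}$ as a supremum, the two-sided bound
\[
\left( C^{\prime}_{x} - a \right) \epsilon^\beta \le |\deltamin{f}{x}| - a \epsilon^\beta \le \left( C_x - a \right) \epsilon^\beta
\]
holds, and dividing through by the positive quantity $\epsilon^\beta$ isolates the normalized increment between $C^{\prime}_{x} - a$ and $C_x - a$.

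The crux --- and the only place requiring genuine care --- is the case split on the sign of $\deltamin{f}{x}$, since the absolute value behaves differently in the limit in the two cases. If $\deltamin{f}{x} \ge 0$, then the middle term tends to $0$ as $\epsilon \to 0$, and because $C_x$ is an infimum I would conclude $\llim{\epsilon}{0}{C_x} = a$, whence $C_x = a$ by constancy in $\epsilon$. If instead $\deltamin{f}{x} \le 0$, the absolute value flips the sign, so the middle term tends to $-2a$, forcing $\llim{\epsilon}{0}{C_x} \ge -a$ and, by the same infimum reasoning, $C_x = -a$. Combining the two cases yields $C_x = |a| = |\fdiffmin{f}{x}{\beta}|$, and the identical argument applied to the supremum characterization gives $C^{\prime}_{x} = |a|$, completing the proof. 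I expect no real obstacle here: the content is entirely symmetric to the forward lemma, and the only subtlety is the sign bookkeeping together with the careful use of the inf/sup characterizations to pin the constant down to $a$ rather than to a mere inequality.
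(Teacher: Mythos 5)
Your proposal is correct and matches the paper's intent exactly: the paper states this backward lemma without a separate proof, relying on the reader to transcribe the argument of Lemma~\ref{th:bondvar1} with $\Delta_{\epsilon}^{-}$ and $\fdiffmin{f}{x}{\beta}$ in place of their forward counterparts, which is precisely what you do, including the sign case-split and the inf/sup pinning. Your remark that the natural H\"older hypothesis lives on $[x-\epsilon,\,x]$ rather than $[x,\,x+\epsilon]$ is a fair observation about the statement, but it does not change the argument.
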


\begin{proposition} 
	\label{th:diffvar2}
	Let $f(x) \in \holder{\alpha}$ about $x$ and $0<\beta < \alpha \leq 1$ then
	$ \fdiffplus{f}{x}{\beta} =  \fdiffmin{f}{x}{\beta}   = 0
	$.
\end{proposition}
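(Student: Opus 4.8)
The plan is to estimate the fractional difference quotient directly from the H\"older bound, without invoking the finer machinery of the bound lemmas (which in any case presuppose a nonvanishing velocity). First I would unfold the definition of the forward velocity as
\[
\fdiffplus{f}{x}{\beta} = \llim{\epsilon}{0}{\frac{\deltaplus{f}{x}}{\epsilon^\beta}} \ecma
\]
and recall that, since $f(x) \in \holder{\alpha}$ about $x$, Definition \ref{def:holder} supplies constants $C, \delta > 0$ with $|\deltaplus{f}{x}| = |f(x+\epsilon)-f(x)| \leq C\,\epsilon^\alpha$ whenever $0 < \epsilon \leq \delta$.

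The key step is then the single estimate obtained by dividing this inequality by the positive quantity $\epsilon^\beta$, namely
\[
\left| \frac{\deltaplus{f}{x}}{\epsilon^\beta} \right| \leq C\,\epsilon^{\alpha - \beta} \epnt
\]
Because the hypothesis $0 < \beta < \alpha \leq 1$ forces the exponent $\alpha - \beta$ to be strictly positive, the right-hand side tends to $0$ as $\epsilon \to 0^{+}$, and the squeeze theorem yields $\fdiffplus{f}{x}{\beta} = 0$. For the backward velocity I would repeat the identical argument with $\deltamin{f}{x} = f(x) - f(x-\epsilon)$ in place of $\deltaplus{f}{x}$, the H\"older inequality applying verbatim to the pair $x,\, x-\epsilon$; this gives $\fdiffmin{f}{x}{\beta} = 0$ and completes the proof.

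I do not anticipate a genuine obstacle here: the statement is a direct consequence of the positive gap $\alpha - \beta > 0$ between the H\"older exponent of the function and the order of the velocity. The only point that warrants a line of care is making explicit that the H\"older constant $C$ is independent of $\epsilon$, so that the majorant $C\,\epsilon^{\alpha - \beta}$ genuinely vanishes in the limit rather than concealing a hidden $\epsilon$-dependence.
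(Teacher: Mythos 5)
Your argument is correct, and it is the natural one: the H\"older bound $|\deltaplus{f}{x}| \leq C\,\epsilon^{\alpha}$ with $C$ independent of $\epsilon$, divided by $\epsilon^{\beta}$, gives the majorant $C\,\epsilon^{\alpha-\beta}\to 0$, and likewise for the backward difference. For context: the paper states this proposition without any proof at all (it sits between the bounds-of-variation lemmas and Lemma~\ref{lem:LebesgueHolder}, with no proof environment attached), so there is no authorial argument to compare against. The placement suggests the author may have intended it as a consequence of Lemma~\ref{th:bondvar1}: if $\fdiffplus{f}{x}{\beta}=a\neq 0$ that lemma forces the infimal constant $C_x$ in $|\deltaplus{f}{x}|\leq C_x\,\epsilon^{\beta}$ to equal $|a|>0$, whereas the \holder{\alpha} bound with $\alpha>\beta$ shows that any positive constant eventually works, so $C_x=0$ --- a contradiction. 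Your direct squeeze is cleaner and avoids relying on that lemma (whose own proof is somewhat delicate); the contradiction route buys nothing extra here. Your closing caveat about the $\epsilon$-independence of $C$ is exactly the right point to make explicit, since Definition~\ref{def:holder} supplies a single pair $(C,\delta)$ valid for all increments below $\delta$.
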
   

\begin{lemma}
	\label{lem:LebesgueHolder}
	Let $f(x) \in \holder{\beta}$ in the interval $[a, b]$. 
	Then either $ \fdiffplus {f}{a}{\beta} \neq 0$ or 
	$ \fdiffplus {f}{b}{\beta} \neq 0$  or $ \fdiffmin {f}{a}{\beta} \neq 0$ or 
	$ \fdiffmin {f}{b}{\beta} \neq 0$.
\end{lemma}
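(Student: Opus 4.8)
The plan is to argue by contradiction. Suppose that all four one-sided fractional velocities at the endpoints vanish, that is
\[
\fdiffplus{f}{a}{\beta} = \fdiffmin{f}{a}{\beta} = \fdiffplus{f}{b}{\beta} = \fdiffmin{f}{b}{\beta} = 0 \epnt
\]
The aim is to contradict the hypothesis that $\beta$ is genuinely the (pointwise unique) H\"older exponent of $f$ on $[a,b]$, i.e.\ that $f \notin \holder{\beta'}$ for any $\beta' > \beta$. The bridge between the velocities and the sharp local H\"older behaviour is provided by Lemmas \ref{th:bondvar1} and \ref{th:bondvar2}: whenever a one-sided velocity is non-zero, its modulus equals the sharp one-sided local constant $C_x = C'_x$. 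Thus the vanishing of the four velocities is to be read as the statement that the sharp forward and backward constants of $f$ at the two endpoints are as small as possible, so that near $a$ and near $b$ the increments of $f$ are of order $o(\epsilon^\beta)$.

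First I would fix the sharp global constant
\[
C^{*} := \sup_{\substack{x,y \in [a,b] \\ x \neq y}} \frac{|f(x)-f(y)|}{|x-y|^\beta} \ecma
\]
which is finite because $f \in \holder{\beta}$ and strictly positive because $\beta$ is the pointwise unique exponent: were $C^{*}=0$, the function would be constant and would then admit every H\"older exponent, contradicting uniqueness. I would then extract a maximising sequence of pairs $(x_n,y_n)$ and split the analysis according to whether $|x_n-y_n|$ stays bounded away from zero or tends to zero. In the second case, by compactness of $[a,b]$ the pairs cluster at a single point $z$, and the corresponding one-sided fractal variation $\fracvarplus{f}{z}{\beta}$ (or its backward counterpart) stays bounded below by a fixed positive fraction of $C^{*}$; hence the velocity at $z$, when the limit exists, cannot vanish, and by Proposition \ref{th:diffvar2} the exponent at $z$ is exactly $\beta$.

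The decisive step is to force this non-vanishing point $z$ to be an endpoint. Here I would run a nested-interval (bisection) construction: starting from $[a,b]$, I would always retain a half on which the restricted sharp constant remains bounded below by a fixed positive fraction of $C^{*}$, producing a nested family of intervals shrinking to a point at which a one-sided velocity is non-zero. Combined with the assumption that the velocities vanish at $a$ and $b$, this clustering point cannot be screened off by the endpoints; equivalently, covering $[a,b]$ by neighbourhoods on which the relevant one-sided increments are $o(\epsilon^\beta)$ would upgrade $f$ to class $\holder{\beta'}$ with $\beta' > \beta$ on the whole interval, the sought contradiction.

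The main obstacle is precisely this transfer from the purely local endpoint hypotheses to a global improvement of the exponent: it requires a compactness or covering argument rather than a one-line manipulation, and it must accommodate points where the limit defining the velocity fails to exist (the fractal variation being merely bounded and oscillatory). For this reason the argument has to be carried at the level of the sharp constants $C_x$, $C'_x$ furnished by Lemmas \ref{th:bondvar1}--\ref{th:bondvar2}, and not at the level of the exponent directly, since a vanishing $\beta$-velocity alone does not \emph{certify} higher-order regularity.
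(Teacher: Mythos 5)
Your proposal is a plan rather than a proof, and the decisive step is missing. The contradiction argument produces, via the maximising sequence and the bisection construction, a point $z\in[a,b]$ near which the sharp constant $C^{*}$ does not collapse; but nothing in the argument forces $z$ to be one of the endpoints $a$ or $b$, and the sentence asserting that the clustering point ``cannot be screened off by the endpoints'' is exactly the claim that needs proving. If $z$ is interior, the vanishing of the four endpoint velocities is in no tension whatsoever with the existence of a singular interior point, so no contradiction is reached. Indeed $f(x)=|x-c|^{\beta}$ with $a<c<b$ belongs to $\holder{\beta}$ on $[a,b]$, is of class $\fclass{C}{1}$ at both endpoints, and hence by Proposition \ref{th:diffvar1} has all four endpoint $\beta$-velocities equal to zero; this shows that any argument resting only on the membership $f\in\holder{\beta}$ cannot close, and that your implicit extra hypothesis (that $\beta$ is the sharp pointwise exponent \emph{at the endpoints}) is doing all the work while begging the question. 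A secondary gap: a maximising sequence of pairs $(x_n,y_n)$ clustering at $z$ controls two-point quotients $|f(x_n)-f(y_n)|/|x_n-y_n|^{\beta}$ with both points free, not the one-sided quotients anchored at $z$ itself, so the asserted lower bound on $\fracvarplus{f}{z}{\beta}$ by a fixed fraction of $C^{*}$ does not follow; the quotient with one endpoint pinned at $z$ can be much smaller.

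For comparison, the paper's own proof takes an entirely different and much shorter route: it invokes Lebesgue's differentiation theorem to get differentiability of $f$ almost everywhere, then applies Corollary \ref{th:fdiff1} (eq.\ \ref{eq:fdiffsimp}) to convert divergence of $f^{\prime}$ at an endpoint into a non-zero fractional velocity there. That argument is itself terse and conditional, but it at least aims directly at the endpoints rather than at an interior clustering point. Your compactness/covering strategy, to succeed, would need an additional mechanism propagating the interior singularity to the boundary, and none is supplied.
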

\begin{proof}
	The proof follows from the   Lebesgue's differentiation Theorem\footnote{ 
		Let $f(x) : [a, b] \rightarrow \fclass{R}{} $ be an monotone function. Then $f(x)$ is differentiable almost everywhere in $[a, b]$.
		For an elementary proof see \cite{Faure2003}.
	}.
	The H\"older condition fulfills the assumption of the theorem.
	Therefore, the condition of Corollary \ref{th:fdiff1}, eq. \ref{eq:fdiffsimp} holds.
	Therefore, if $f^\prime(x)$ diverges at $a$ then $ \fdiffplus {f}{a}{\beta} \neq 0$ holds.
	Similar considerations are applicable in the other cases.
\end{proof}

\begin{theorem}[Discontinuous variation]
 	Let $f(x) \in \holder{\beta}$ in the interval $[x, x+ \epsilon]$. Then if $\fdiffplus {f}{x}{\beta} \neq 0$ then $\fdiffplus {f}{x}{\beta}$ is discontinuous about $x$. 
 	If $\fdiffmin {f}{x}{\beta} \neq 0$ then $\fdiffmin {f}{x}{\beta}$ is discontinuous about $x$. 
\end{theorem}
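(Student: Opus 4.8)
The plan is to argue by contradiction, using the scale invariance of the increment built into the definition of the velocity. Write $g(y) := \fdiffplus{f}{y}{\beta}$ for the forward velocity regarded as a function of the base point, suppose that $g(x) = L \neq 0$, and assume, contrary to the claim, that $g$ is continuous at $x$. Then $g(y)$ exists for every $y$ in a right neighbourhood of $x$ and $g(y) \to L$ as $y \to x^{+}$. I treat the forward case; the backward one is identical with $x-\epsilon$ replacing $x+\epsilon$.

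First I would extract a scaling identity from the mere existence of $\fdiffplus{f}{x}{\beta}$. Fix $\lambda > 0$. Since $(1+\lambda)\epsilon \to 0$ together with $\epsilon$, the definition of the velocity gives $f(x+(1+\lambda)\epsilon)-f(x) = L(1+\lambda)^{\beta}\epsilon^{\beta}+o(\epsilon^{\beta})$ and $f(x+\epsilon)-f(x) = L\epsilon^{\beta}+o(\epsilon^{\beta})$. Subtracting and dividing by $(\lambda\epsilon)^{\beta}$ yields
\[
\lim_{\epsilon\to 0^{+}} \frac{f\big((x+\epsilon)+\lambda\epsilon\big) - f(x+\epsilon)}{(\lambda\epsilon)^{\beta}} = c_{\beta}(\lambda)\, L , \qquad c_{\beta}(\lambda) := \frac{(1+\lambda)^{\beta}-1}{\lambda^{\beta}} \epnt
\]
The quotient on the left is precisely the forward fractional variation of $f$ based at the moving point $x+\epsilon$ taken at step $\lambda\epsilon$, so the identity records how the variation behaves as its base point descends to $x$.

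The decisive point is that $t \mapsto t^{\beta}$ is strictly subadditive for $0<\beta<1$, so $(1+\lambda)^{\beta} < 1 + \lambda^{\beta}$ and hence $c_{\beta}(\lambda) \in (0,1)$ for every $\lambda>0$; the choice $\lambda = 1$ gives the clean value $c_{\beta}(1) = 2^{\beta}-1$. If, on the other hand, $g$ were continuous at $x$, the diagonal limit above ought to coincide with $\lim_{\epsilon\to 0^{+}} g(x+\epsilon) = g(x) = L$. Equating $L$ with $c_{\beta}(1)L = (2^{\beta}-1)L$ and cancelling $L\neq 0$ forces $2^{\beta}=2$, i.e. $\beta = 1$, contradicting $0<\beta<1$. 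Therefore $g$ must be discontinuous at $x$.

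The step I expect to be the main obstacle is exactly the interchange of limits just invoked: the displayed relation is a diagonal limit in which the base point $x+\epsilon$ and the step $\lambda\epsilon$ shrink simultaneously, whereas $g(x+\epsilon)$ is the limit of the variation at a \emph{fixed} base point, so the identification of the two is not automatic. To close this gap I would show that continuity of $\fdiffplus{f}{\cdot}{\beta}$ at $x$ upgrades to uniform convergence of the variation $\fracvarplus{f}{y}{\beta}$ towards the velocity for $y$ near $x$. This uniform control I expect from the H\"older bound $|\deltaplus{f}{y}| \le C\epsilon^{\beta}$ combined with the Bounds of forward variation (Lemma \ref{th:bondvar1}), which pins $|\fdiffplus{f}{y}{\beta}|$ between the extremal constants $C'_{y}$ and $C_{y}$ and thus squeezes the variation uniformly in the base point; the backward statement uses Lemma \ref{th:bondvar2} in the same role.
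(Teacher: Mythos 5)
Your scaling identity is correct and is a genuinely nice observation: from $f(x+\epsilon)-f(x)=L\epsilon^{\beta}+o(\epsilon^{\beta})$ one does obtain
\[
\llim{\epsilon}{0}{}\ \frac{f\bigl((x+\epsilon)+\lambda\epsilon\bigr)-f(x+\epsilon)}{(\lambda\epsilon)^{\beta}}
= \frac{(1+\lambda)^{\beta}-1}{\lambda^{\beta}}\; L \ecma
\]
with the factor lying strictly in $(0,1)$ for $0<\beta<1$ by subadditivity of $t^{\beta}$. But the step you yourself flag as the main obstacle is a genuine gap, and the patch you propose does not close it. The displayed quantity is a diagonal limit in which the step $h=\lambda\epsilon$ is tied to the base point $y=x+\epsilon$; to identify it with $\llim{y}{x}{\fdiffplus{f}{y}{\beta}}$ you need the convergence $\fracvarplus{f}{y}{\beta}\rightarrow\fdiffplus{f}{y}{\beta}$ to be uniform in $y$ near $x$. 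Continuity of the limit function does not supply this: pointwise convergence to a continuous limit yields local uniform convergence only under an additional hypothesis such as monotonicity (Dini) or equicontinuity of the family, neither of which is available. The H\"older bound $|\deltaplus{f}{y}|\leq C\epsilon^{\beta}$ gives only uniform \emph{boundedness} of the difference quotients, and Lemmas \ref{th:bondvar1} and \ref{th:bondvar2} merely identify $|\fdiffplus{f}{y}{\beta}|$ with the optimal pointwise constants $C_{y}$, $C^{\prime}_{y}$; they say nothing about the \emph{rate} at which the variation approaches the velocity, let alone a rate uniform in the base point. So the decisive equation $L=(2^{\beta}-1)L$ is not established.

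Worse, the uniformity you would need is essentially the content of the theorem itself. The model case $f(t)=L(t-x)^{\beta}$ for $t\geq x$ has $\fdiffplus{f}{x}{\beta}=L\neq 0$ while $\fdiffplus{f}{y}{\beta}=0$ for every $y>x$ (Proposition \ref{th:diffvar1}, since $f$ is $\fclass{C}{1}$ away from $x$); your scaling identity still holds there, and it is precisely the failure of uniform convergence that reconciles the two. Reducing the theorem to ``continuity forces uniformity'' therefore begs the question. The paper's own argument takes a different route: it uses the representation $f(x+\epsilon)=f(x)+K\epsilon^{\beta}+\gamma\epsilon^{\beta}$ of Theorem \ref{th:holcomp1} to argue that a nonzero velocity that persisted continuously near $x$ would force $f$ to behave locally like $K\epsilon^{\beta}$, whose $\beta$-velocity vanishes at every nearby point, contradicting $K\neq 0$; the non-constant case is handled via Lemma \ref{lem:LebesgueHolder} and Corollary \ref{th:fdiff1}. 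If you want to repair your argument along its own lines, the workable direction is that one: show that at points $y>x$ the leading power-law part contributes zero velocity, so that $\fdiffplus{f}{y}{\beta}$ is governed entirely by the $o(\epsilon^{\beta})$ remainder, rather than trying to force the diagonal and iterated limits to agree.
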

  \begin{proof}
	Suppose that  $\fdiffplus {f}{x}{\beta}  \in \holder{\alpha}$ and  $\fdiffplus {f}{x}{\beta} \neq 0$. 
	\begin{description}
		\item[Constant case]
			Suppose that $\fdiffplus {f}{x}{\beta} =K \neq 0$, constant. Then
			\[
			\llim{\epsilon}{0}{ \dfrac{f(x+ \epsilon) - f(x)}{\epsilon^\beta}} = K
			\]
			without loss of generality let's assume that $f(x_s)=0$ for a given fixed $x_s$. Therefore,
			\[
			\llim{\epsilon}{0}{ \dfrac{f(x_s+ \epsilon) }{\epsilon^\beta}} = K
			\]
			and $f(x +\epsilon)$ is of the form $g(x_s) \epsilon^\beta$. Therefore, $g(x)$ is constant. Therefore, by the reasoning of the previous case we conclude that for  $\beta <1$ the hypothesis is false.
		\item[Non-constant case]
		Suppose further that $\fdiffplus {f}{x}{\beta} \neq 0$.
		Then the conditions of Lemma \ref{lem:LebesgueHolder} hold.
		According to Corollary \ref{th:fdiff1} the co-domain of  $\fdiffplus {f}{x}{\beta} $ or respectively$ $\fdiffmin {f}{x}{\beta} is discrete. Therefore, so-derived functions are discontinuous. 
	\end{description}	
  \end{proof}
  
\section{Taylor expansions}
\label{sec:taylorexp}

\subsection{Fractional--order Taylor expansions}
\label{sec:taylorpoly}

\begin{theorem}[Fractional Taylor expansion]
	\label{th:holcomp1}
Let $f(x) \in \fclass{C}{0}$ about $x$ in the interval $ x \in [x, x+ \epsilon]$.
Then the existence of $\fdiffplus{f}{x}{\beta}$ at $x$ for a $\beta \leq 1$ implies that
\[
f(x +\epsilon)= f(x) + K \epsilon^\beta +\gamma \; \epsilon^\beta  \ecma
\] 
where $ K= \fdiffplus{f}{x}{\beta}$ and \llim{\epsilon}{0}{ \gamma}=0 and vice versa.
The existence of $\fdiffmin{f}{x}{\beta}$ implies that
\[
f(x -\epsilon)= f(x) - K^\star \epsilon^\beta +\gamma \; \epsilon^\beta  \ecma
\] 
where $ K^\star= \fdiffmin{f}{x}{\beta}$ and \llim{\epsilon}{0}{ \gamma}=0 and vice versa.
\end{theorem}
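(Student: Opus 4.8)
The plan is to read the statement directly off the definition of the fractional velocity as a limit, since the claim amounts to the equivalence between existence of $\fdiffplus{f}{x}{\beta}$ and a first-order fractional Taylor expansion whose remainder is $o(\epsilon^\beta)$. For the forward implication I would take the hypothesis that $\fdiffplus{f}{x}{\beta}$ exists and set $K = \fdiffplus{f}{x}{\beta}$. By Definition \ref{def:fracvar} together with Definition \ref{def:frdiff2} this is precisely the statement $\llim{\epsilon}{0}{\fracvarplus{f}{x}{\beta}} = K$. I would then introduce the residual
\[
\gamma := \fracvarplus{f}{x}{\beta} - K = \frac{f(x+\epsilon)-f(x)}{\epsilon^\beta} - K \ecma
\]
whence $\llim{\epsilon}{0}{\gamma} = 0$ follows immediately from the definition of the limit. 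Clearing the denominator $\epsilon^\beta$ and solving for $f(x+\epsilon)$ then gives $f(x+\epsilon) = f(x) + K\epsilon^\beta + \gamma\,\epsilon^\beta$, which is the asserted expansion.

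For the converse (``vice versa'') I would start from an expansion $f(x+\epsilon) = f(x) + K\epsilon^\beta + \gamma\,\epsilon^\beta$ with $\llim{\epsilon}{0}{\gamma}=0$, subtract $f(x)$, and divide by the strictly positive quantity $\epsilon^\beta$ to recover $\fracvarplus{f}{x}{\beta} = K + \gamma$. Passing to the limit $\epsilon\to 0$ and using $\gamma\to 0$ shows that the defining limit of the fractional velocity exists and equals $K$, i.e. $\fdiffplus{f}{x}{\beta} = K$.

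The backward statement is handled by the identical two-step argument applied to $\fracvarmin{f}{x}{\beta} = \bigl(f(x)-f(x-\epsilon)\bigr)/\epsilon^\beta$ with $K^\star = \fdiffmin{f}{x}{\beta}$. The only point deserving attention is the bookkeeping of signs: solving $f(x)-f(x-\epsilon) = (K^\star+\gamma)\epsilon^\beta$ for $f(x-\epsilon)$ yields $f(x-\epsilon) = f(x) - K^\star\epsilon^\beta - \gamma\,\epsilon^\beta$, and since $\llim{\epsilon}{0}{\gamma}=0$ holds iff $\llim{\epsilon}{0}{(-\gamma)}=0$, one may absorb the sign to reproduce the stated form $f(x-\epsilon) = f(x) - K^\star\epsilon^\beta + \gamma\,\epsilon^\beta$. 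I do not expect any substantive obstacle, since the result is essentially a reformulation of the limit that defines the velocity; the continuity hypothesis $f\in\fclass{C}{0}$ is used only to ensure that the function values entering the difference quotients are well defined about $x$.
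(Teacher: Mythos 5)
Your proposal is correct and follows essentially the same route as the paper: both arguments simply unwind the definition of the fractional velocity by writing $\gamma = \fracvarplus{f}{x}{\beta} - K$ (resp. the backward analogue) and passing to the limit. If anything, you are slightly more explicit than the paper, which only writes out the ``expansion implies velocity'' direction and the sign bookkeeping for the backward case in compressed form; your handling of both implications and of the sign of $\gamma$ is consistent with what the paper intends.
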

\begin{proof}
	Suppose that 
	\[
	f(x +\epsilon)= f(x) + K \epsilon^\beta +\gamma \; \epsilon^\beta  \ecma
	\] 
	where  \llim{\epsilon}{0}{ \gamma}=0.
	Then 
	$\fracvarplus{f}{x}{\beta} = K + \gamma $. Taking the limit provides the result.
	
	Suppose that 
	\[
	f(x - \epsilon)= f(x) - K^\star \epsilon^\beta +\gamma \; \epsilon^\beta  \ecma
	\] 
	where  \llim{\epsilon}{0}{ \gamma}=0.
	Then 
	$- \fracvarmin{f}{x}{\beta} = - K^\star + \gamma $. Taking the limit provides the result.

\end{proof}
Therefore, if follows that the following fractional Taylor expansion holds:
\begin{equation}
f(x \pm \epsilon) = f(x) \pm \fdiffpm {f}{x}{\alpha}   \epsilon^\alpha + \mathcal{O} \left(  \epsilon^{\alpha +} \right)  \ecma
\end{equation}
where the last expression in the notation is interpreted as an arbitrary exponent larger than $ \alpha$.

\subsection{Higher order fractional Taylor expansion}
\label{sec;taylor2}

\begin{definition}
	Let us define formal fractional Taylor polynomials for an  increasing  sequence $ \left\lbrace  \alpha_i  \right\rbrace  _{i=1}^n$  as:
	\begin{flalign*}
	T_{ \alpha, n}^{+}(x ,\epsilon) &= f(x) +  \sum\limits_{k=0}^{n} c_k   \epsilon^{\alpha +k }     \ecma \\
	T_{ \alpha, n}^{-}(x ,\epsilon) &= f(x) +  \sum\limits_{k=0}^{n} c_k  (- \epsilon)^{ k } \epsilon^{\alpha +k }\ecma
	\end{flalign*} 
	where $\alpha$ denotes the multi-index and   $ c_i  $ are arbitrary constants.
	\end{definition}
 We will establish the relationship to the growth of the function at $x\pm \epsilon$, respectively.
 We will look for $\mathcal{O}\left(  {\epsilon^m}\right) $ equivalence. That is
 
 \[
\llim{\epsilon}{0}{} \dfrac{ f(x+ \epsilon) - T_{ \alpha, n}^{+}(x ,\epsilon) }{\epsilon^m}  = 0
 \]
 Obviously for $k=0$ holds $c_0= \fdiffplus{f}{x}{\alpha}$. 
 Then the second coefficient can be calculated as
 \begin{flalign*}
 \llim{\epsilon}{0}{} \dfrac{ f(x+ \epsilon) - f(x) - \fdiffplus{f}{x}{\alpha}{\epsilon}^{\alpha}  }{\epsilon^m}  
 & = \llim{\epsilon}{0}{} \dfrac{ T_{ \alpha, n}^{+}(x ,\epsilon) - f(x) - \fdiffplus{f}{x}{\alpha}{\epsilon}^{\alpha}  }{\epsilon^m} \\
& = \llim{\epsilon}{0}{}\dfrac{  c_1 \epsilon^{\alpha +1 }  + \mathcal{O}(\epsilon^{2 +\alpha})   }{\epsilon^m} \\
& = \llim{\epsilon}{0}{ c_1 \epsilon^{\alpha +1 - m}}
 + \llim{\epsilon}{0}{\mathcal{O}(\epsilon^{2 +\alpha -m })}
 \end{flalign*}
 Therefore, in order for the RHS to be finite we must have $m=1+\alpha$.
 Then for the LHS we have
 \[
 \llim{\epsilon}{0}{} \dfrac{ f(x+ \epsilon) - f(x) - \fdiffplus{f}{x}{\alpha}{\epsilon}^{\alpha}  }{\epsilon^{1+\alpha}}
 = \frac{1}{{1+\alpha}} \llim{\epsilon}{0}{} \dfrac{ f^{\prime}(x+ \epsilon)  - \alpha \; \fdiffplus{f}{x}{\alpha}{\epsilon}^{\alpha-1}  }{\epsilon^{\alpha}}
 \]
The limit can be evaluated by application of L'H\^opital's rule and rationalization.
\begin{flalign*}
 \frac{1}{1+\alpha} \llim{\epsilon}{0}{} \dfrac{ f^{\prime}(x+ \epsilon)  - \alpha \; \fdiffplus{f}{x}{\alpha}{\epsilon}^{\alpha-1}  }{\epsilon^{\alpha}}  
  & = \frac{1}{1+\alpha}  \llim{\epsilon}{0}{} \dfrac{ \epsilon f^{\prime}(x+ \epsilon) - \alpha \; \fdiffplus{f}{x}{\alpha}{\epsilon}^{\alpha }}{\epsilon^{1 +\alpha}} \\
  &  = \frac{1}{1+\alpha}  \llim{\epsilon}{0}{}
  \dfrac{ \epsilon^{1-\alpha} f^{\prime}(x+ \epsilon) - \alpha \; \fdiffplus{f}{x}{\alpha} }{\epsilon} \\
  & = \frac{1}{1+\alpha} \llim{\epsilon}{0}{} \dfrac{\partial }{\partial \, \epsilon} \; \epsilon^{1-\alpha} f^{\prime}(x+ \epsilon)
\end{flalign*}
Therefore, 
\[
c_{1} = \frac{1}{1+\alpha} \llim{\epsilon}{0}{} \dfrac{\partial }{\partial \, \epsilon} \; \epsilon^{1-\alpha} f^{\prime}(x+ \epsilon) \epnt
\]

The same procedure can be extended for the general case by induction.
For an arbitrary $k \leq n $ we will have
\[
c_{k} =  \llim{\epsilon}{0}{}  \dfrac{ f(x+ \epsilon) - 	T_{ \alpha, n}^{+}(x ,\epsilon)}{\epsilon^{k + \alpha}} = \llim{\epsilon}{0}{}  \dfrac{ f(x+ \epsilon) - 	T_{ \alpha, k}^{+}(x ,\epsilon)}{\epsilon^{k + \alpha}}
\]
Therefore, we would have 
\[
 \frac{1}{k+\alpha} \llim{\epsilon}{0}{}\dfrac{ f^{\prime}(x+ \epsilon) - 	\left( T_{ \alpha, k}^{+}(x ,\epsilon) \right)^\prime_\epsilon }{\epsilon^{k + \alpha}}
\]
\[
 = \frac{1}{k+\alpha} \llim{\epsilon}{0}{} \dfrac{ \epsilon^{\alpha -1 }\left( 
 	\epsilon^{1-\alpha} f^{\prime}(x+ \epsilon)
 	- \sum\limits_{j=0}^{k} c_{j} (\alpha +j) \epsilon^{\alpha +j -1 -( \alpha -1) }
 	\right)  }{ \epsilon^{k + \alpha} }
\]
\[
 \frac{1}{k+\alpha}  \llim{\epsilon}{0}{} \dfrac{
 		\epsilon^{1-\alpha} f^{\prime}(x+ \epsilon)
 		- \sum c_{k} (\alpha +k) \epsilon^k }{ \epsilon^{k+1} }
\]
By applying $k$ times L'H\^opital's rule the denominator can be evaluated to give $ k!$ in order to eliminate the Taylor polynomial.
Therefore, finally
\[
c_{k} = \frac{1}{k! \, \left( 1+\alpha \right)} \llim{\epsilon}{0}{} \dfrac{\partial\, ^k }{\partial  \epsilon^k} \; \epsilon^{1-\alpha} f^{\prime}(x+ \epsilon) \epnt
\]
Applying similar procedure to the backward case would yield 
\[
c_{k} = \frac{ \left(-1 \right)^k}{k! \, \left( 1+\alpha \right)} \llim{\epsilon}{0}{} \dfrac{\partial\, ^k }{\partial  \epsilon^k} \; \epsilon^{1-\alpha} f^{\prime}(x - \epsilon) \epnt
\]

\begin{theorem}[Mixed order Taylor expansion]
\label{th:frtaylormixed}
	Let $f(x) \in \fclass{C}{n}$ about $x$ in the interval $ x \in (x, x + \epsilon]$ or 
	$[  x - \epsilon, x)$, respectively.
	Moreover, let  \fdiffplus{f}{x}{\alpha} or \fdiffmin{f}{x}{\alpha} exist and are non zero at $x$.
Then the following expansions hold:
 \[
  f(x +\epsilon) = f(x) +  \sum\limits_{k=0}^{n} c_k   \epsilon^{\alpha +k }  + \mathcal{O} \left( \epsilon^{n+\alpha}\right)  \ecma 
 \]
 where 
  \[
  c_{k} = \frac{1}{k! \, \left( k+\alpha \right)} \llim{\epsilon}{0}{} \dfrac{\partial\, ^k }{\partial  \epsilon^k} \; \epsilon^{1-\alpha} f^{\prime}(x+ \epsilon)
  \]
  and
   \[
   f(x - \epsilon) = f(x) +  \sum\limits_{k=0}^{n} c_k   \epsilon^{\alpha +k }  + \mathcal{O} \left( \epsilon^{n+\alpha}\right)  \ecma
   \]
   where
   \[
   c_{k} = \frac{ (-1)^k }{k! \, \left( k+\alpha \right)} \llim{\epsilon}{0}{} \dfrac{\partial\, ^k }{\partial  \epsilon^k} \; \epsilon^{1-\alpha} f^{\prime}(x - \epsilon) \epnt
   \]
\end{theorem}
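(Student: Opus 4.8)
The plan is to prove both expansions by induction on the order $k$, determining the coefficients $c_k$ one at a time from the requirement that the formal fractional Taylor polynomial $T_{\alpha,k}^{+}(x,\epsilon)$ agree with $f(x+\epsilon)$ to order $\epsilon^{k+\alpha}$. This promotes the explicit computation carried out immediately before the theorem to a clean induction; the substantive work is to verify the regularity conditions that legitimise each application of l'H\^opital's rule.

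For the base case $k=0$, Theorem \ref{th:holcomp1} applies directly: the existence of $\fdiffplus{f}{x}{\alpha}$ yields $c_0 = \fdiffplus{f}{x}{\alpha}$ together with $f(x+\epsilon) - f(x) - c_0 \epsilon^{\alpha} = \gamma\,\epsilon^{\alpha}$ with $\gamma \to 0$, so the expansion holds to order $\alpha$. For the inductive step I would assume that $c_0,\dots,c_{k-1}$ have already been fixed so that $f(x+\epsilon) - T_{\alpha,k-1}^{+}(x,\epsilon) = o(\epsilon^{(k-1)+\alpha})$, and then define
\[
c_k := \llim{\epsilon}{0}{} \dfrac{ f(x+\epsilon) - T_{\alpha,k-1}^{+}(x,\epsilon)}{\epsilon^{k+\alpha}} \epnt
\]
Both numerator and denominator vanish as $\epsilon \to 0$, so I apply l'H\^opital's rule once; differentiating the denominator produces the factor $(k+\alpha)$, and after multiplying through by $\epsilon$ the numerator rearranges into $\epsilon^{1-\alpha} f^{\prime}(x+\epsilon)$ minus the differentiated polynomial $\sum_{j=0}^{k-1} c_j (\alpha+j)\,\epsilon^{j}$. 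Differentiating numerator and denominator a further $k$ times eliminates this polynomial (its degree is $k-1$) and evaluates the denominator $\epsilon^{k+1}$ to the constant $k!$, leaving exactly the stated formula $c_k = \frac{1}{k!\,(k+\alpha)}\,\llim{\epsilon}{0}{}\frac{\partial^{\,k}}{\partial\epsilon^{k}}\,\epsilon^{1-\alpha} f^{\prime}(x+\epsilon)$.

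The main obstacle is analytic rather than algebraic: at every stage I must confirm that the form really is $0/0$ and that $\epsilon \mapsto \epsilon^{1-\alpha} f^{\prime}(x+\epsilon)$ admits $k$ derivatives on a punctured neighbourhood of $0$ with a finite limit of the $k$-th derivative. The hypothesis $f \in \fclass{C}{n}$ guarantees $f^{\prime}\in\fclass{C}{n-1}$, so the $\epsilon$-derivatives of $f^{\prime}(x+\epsilon)$ exist up to order $n-1\ge k-1$; the singular factor $\epsilon^{1-\alpha}$ contributes negative powers of $\epsilon$ after differentiation, and it is precisely here that the assumption that $\fdiffplus{f}{x}{\alpha}$ exists and is non-zero, via the regularised form of Corollary \ref{th:fdiff1}, must be used to control the boundary behaviour and secure finiteness of the limit. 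Once each $c_k$ is shown to exist, the residual after the $n$-th step is $o(\epsilon^{n+\alpha})$ by construction, which gives the claimed $\mathcal{O}\!\left(\epsilon^{n+\alpha}\right)$ remainder. Finally, I would obtain the backward expansion by repeating the argument verbatim for $f(x-\epsilon)$ and $\fdiffmin{f}{x}{\alpha}$; the only change is the sign $(-1)^{k}$ produced by differentiating the backward increment, exactly as in the forward-to-backward passage recorded before the theorem.
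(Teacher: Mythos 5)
Your proposal is correct and follows essentially the same route as the paper: the paper's ``proof'' of Theorem~\ref{th:frtaylormixed} is precisely the successive determination of the coefficients $c_k$ by repeated application of l'H\^opital's rule carried out in the displayed computation immediately preceding the theorem, which you have merely recast as a clean induction on $k$ with the base case supplied by Theorem~\ref{th:holcomp1}. You even arrive at the correct normalization $\frac{1}{k!\,(k+\alpha)}$ appearing in the theorem statement, whereas the paper's preparatory computation contains a slip writing $\frac{1}{k!\,(1+\alpha)}$; like the paper, however, you leave the regularity hypotheses justifying each application of l'H\^opital's rule asserted rather than verified.
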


\begin{figure}[h!]
	\centering
	\includegraphics[width=0.75\linewidth, bb=0 0 600 400 ]{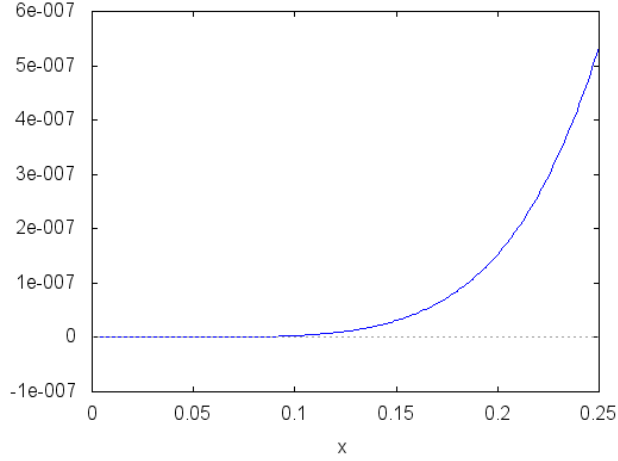}
	\caption{Error of the mixed-order Taylor expansion of $\arcsin(1-x)$ about the origin}
	\label{fig:example1}
\end{figure}
 	
 \begin{example}
 	Let's compute the fractional velocity of $f(x)= \arcsin(1-x)$ about $x=0$.
 	The first derivative of the function is
 	\[ 
 	f^\prime(x)= -\frac{1}{\sqrt{2 \; x-{{x}^{2}}}} \ecma
 	\]	
 	which is undefined about $0$.
 	Therefore, the function does not posses a Taylor expansion about 0.
 	On the other hand, 
 	\[
 	\fdiffplus {f}{0}{  \beta} =  -\dfrac{1}{\beta} \llim{\epsilon}{0}{
 		\frac{ \epsilon^{\beta - \frac{1}{2}}}{\sqrt{2 - \epsilon}} }
 	\]
 	Therefore, for $ \beta=1/2 \;  \fdiffplus{f}{0}{  \beta}  = - \sqrt{2} $. 
 	The fractional Taylor expansion about 0 then is 
 	\[
 	f(x) = \frac{\pi}{2} - \sqrt{2} \, \sqrt{x} + \mathcal{O}(\sqrt{x})
 	\]
 	
 	The regularized derivative then is
 	\[
 	\dj f(0) = \llim{\epsilon}{0}{} \dfrac{\arcsin( 1- \epsilon) - \dfrac{\pi}{2} + \sqrt{2} \, \sqrt{\epsilon}}{\epsilon} = 
 	\]
 	
 	\[
 	\llim{\epsilon}{0}{} \dfrac{1}{\sqrt{\epsilon}} \left( \frac{1}{\sqrt{2}} - \frac{1}{\sqrt{2-\epsilon}}  \right) = -\llim{\epsilon}{0}{}  \frac{   \sqrt{\epsilon} }{ \sqrt{2-\epsilon}^3 } =0
 	\]
 	Applying Theorem \ref{th:frtaylormixed} gives the following approximation:
 	
	\[
	f(x) \approx \frac{\pi }{2} -\sqrt{2}  \sqrt{x} -\frac{{{x}^{\frac{3}{2}}}}{3\cdot {{2}^{\frac{3}{2}}}}
	-\frac{3\cdot {{x}^{\frac{5}{2}}}}{5\cdot {{2}^{\frac{9}{2}}}}
	-\frac{5\cdot {{x}^{\frac{7}{2}}}}{7\cdot {{2}^{\frac{13}{2}}}}
	-\frac{35\cdot {{x}^{\frac{9}{2}}}}{9\cdot {{2}^{\frac{21}{2}}}} + \ldots
	\]
 	The approximation can be appreciated in Fig. \ref{fig:example1} where the error of this expansion is plotted.
 \end{example}

\section{Derivative regularization}
\label{sec:reg}
  
\subsection{Fractional order regularization}
\label{sec:fracreg}

 Here we will repeat  without proof  results that will be used further in the regularization procedure:
 \begin{proposition} 
 	\label{th:diffvar1}
 	Let $f(x) \in \fclass{C}{1}$ about $x$ and $0<\beta <1$ then
 	$ \fdiffplus{f}{x}{\beta} =  \fdiffmin{f}{x}{\beta}   = 0
 	$.
 \end{proposition}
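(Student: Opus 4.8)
The plan is to reduce the claim to the one-sided limit formulas of Corollary \ref{th:fdiff1}, eq. \ref{eq:fdiffsimp}, which express the fractional velocity directly in terms of the ordinary derivative. Since $f \in \fclass{C}{1}$ about $x$, the derivative $f^\prime$ is continuous on a full neighborhood of $x$; in particular it is continuous on $(x, x+\epsilon]$ and on $[x-\epsilon, x)$, which are precisely the hypotheses required to apply eq. \ref{eq:fdiffsimp}. This gives
\[
\fdiffplus{f}{x}{\beta} = \frac{1}{\beta} \llim{\epsilon}{0}{\epsilon^{1-\beta} f^\prime(x+\epsilon)}, \qquad \fdiffmin{f}{x}{\beta} = \frac{1}{\beta} \llim{\epsilon}{0}{\epsilon^{1-\beta} f^\prime(x-\epsilon)} \epnt
\]

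The key observation is that each right-hand side factors into a vanishing power of $\epsilon$ multiplied by a convergent term. Because $f^\prime$ is continuous at $x$, the factors $f^\prime(x\pm\epsilon)$ converge to the finite value $f^\prime(x)$ as $\epsilon \to 0$. Since $0 < \beta < 1$ we have $1-\beta > 0$, so $\epsilon^{1-\beta} \to 0$. The product of a null factor and a bounded convergent factor is null, so both limits vanish and hence $\fdiffplus{f}{x}{\beta} = \fdiffmin{f}{x}{\beta} = 0$, as claimed. This is essentially the computation already carried out in Corollary \ref{cor:unorder} for the case of critical order $n=1$ and subcritical order $m+\beta = \beta$.

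I do not expect a genuine obstacle; the only point demanding care is the verification of the hypotheses of Corollary \ref{th:fdiff1}, namely the one-sided continuity of $f^\prime$ on $(x, x+\epsilon]$ and on $[x-\epsilon, x)$ together with continuity at the endpoint $x$ itself, all of which the two-sided assumption $f \in \fclass{C}{1}$ \emph{about} $x$ supplies at once and which is what legitimizes $f^\prime(x\pm\epsilon) \to f^\prime(x)$. As an independent cross-check, the statement can also be obtained as the boundary case $\alpha = 1$ of Proposition \ref{th:diffvar2}: a function of class \fclass{C}{1} near $x$ has a bounded derivative on a small closed interval and is therefore locally Lipschitz, i.e. of class \holder{1}, so the condition $0 < \beta < 1 = \alpha$ places us exactly in the regime $\beta < \alpha$ covered by that proposition.
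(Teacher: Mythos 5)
Your proof is correct and follows exactly the route the paper itself uses for the analogous Corollary \ref{cor:unorder}: reduce to eq. \ref{eq:fdiffsimp} of Corollary \ref{th:fdiff1} and observe that $\epsilon^{1-\beta}\to 0$ while $f^{\prime}(x\pm\epsilon)$ stays bounded near $x$. The paper states Proposition \ref{th:diffvar1} without proof (deferring to prior work), so your argument fills that gap with the intended technique; the cross-check via Proposition \ref{th:diffvar2} with $\alpha=1$ is a sound bonus.
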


 \begin{proposition}[Duality of the limit of variation]
 	\label{prop:dualvar}
 	If $f(x)$ is defined in $[ x - \epsilon, x+\epsilon]$,  the limit
 	\llim{\epsilon}{0}{\fracvarplus{f}{x}{\beta} } exists and
 	$
 	\llim{\epsilon}{0}{\dfrac{\Delta_{\epsilon} ^2 [f] (x)}{\epsilon^\beta} } = 0
 	$ then  $\fdiffplus {f}{x}{ \beta} = \fdiffmin {f}{x}{ \beta}$.
 	The proof is detailed in \cite{Prodanov2015}.  
 \end{proposition}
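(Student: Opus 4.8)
The plan is to reduce the statement to an elementary algebraic identity linking the symmetric second difference of Definition \ref{def:deltas} to the two first differences. Expanding both sides one sees immediately that $\deltaop{f}{x}{\epsilon}{2} = \deltaplus{f}{x} - \deltamin{f}{x}$, since each reduces to $f(x+\epsilon) - 2f(x) + f(x-\epsilon)$. Dividing through by the positive quantity $\epsilon^\beta$ and invoking Definition \ref{def:fracvar} this rearranges to $\fracvarmin{f}{x}{\beta} = \fracvarplus{f}{x}{\beta} - \dfrac{\deltaop{f}{x}{\epsilon}{2}}{\epsilon^\beta}$, which is the only substantive content of the argument.

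First I would record this decomposition, and then take the limit $\epsilon \rightarrow 0$ on both sides. By hypothesis the forward fractal variation converges, with $\llim{\epsilon}{0}{\fracvarplus{f}{x}{\beta}} = \fdiffplus{f}{x}{\beta}$, while the normalized second difference converges to $0$ by assumption. Since the right-hand side is a difference of two separately convergent expressions, the left-hand side converges as well and its limit equals $\fdiffplus{f}{x}{\beta} - 0$. By Definition \ref{def:frdiff2} that limit is precisely $\fdiffmin{f}{x}{\beta}$, whence $\fdiffmin{f}{x}{\beta} = \fdiffplus{f}{x}{\beta}$, as claimed.

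There is no genuine obstacle here; the full force of the proposition is carried by the splitting of the symmetric second difference into forward and backward parts. The only point requiring care is the \emph{order} of the logical steps: the existence of the backward limit is a conclusion rather than an assumption, so the passage to the limit across the difference is legitimate only after one has observed that both the forward variation and the renormalized second difference converge in their own right. The hypothesis $\llim{\epsilon}{0}{\Delta_{\epsilon}^2 [f](x)/\epsilon^\beta} = 0$ is exactly the condition that forbids the two one-sided velocities from differing in the limit, and dropping it would break the equality.
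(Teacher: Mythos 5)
Your argument is correct: the identity $\deltaop{f}{x}{\epsilon}{2} = \deltaplus{f}{x} - \deltamin{f}{x}$, division by $\epsilon^\beta$, and passage to the limit using the two hypotheses is precisely the elementary computation this proposition rests on, and your remark about the logical order (the backward limit's existence being a conclusion, not an assumption) is the one point worth making explicit. The paper itself gives no proof here, deferring to \cite{Prodanov2015}, and the deferred argument is this same decomposition of the second difference, so your route matches the intended one.
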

 
The value of the fractional velocity of a function $f(x)$ of fractional grade about a point can be regularized to zero using the following result:
  \begin{theorem}[Fractional variation regularization]
  	\label{th:regtaylor}
  Let  $f(x) \in \fclass{C}{0}$  and $\fdiffplus{f}{x}{\beta}$ and/or $\fdiffmin{f}{x}{\beta}$ be defined at $x$ for a given $0 < \beta < 1$  then
  \[
  	\llim{\epsilon}{0}{   \frac{f(x +\epsilon) - g(x, \epsilon)}{\epsilon^\beta}} = \llim{\epsilon}{0}{    \frac{g^\star(x, \epsilon) -  f(x -\epsilon) }{\epsilon^\beta}} =0
  \]
  	where $g(x, \epsilon)= f(x) + \fdiffplus{f}{x}{\beta} \; \epsilon^\beta$ and
  $g^\star(x, \epsilon)= f(x) - \fdiffmin{f}{x}{\beta} \; \epsilon^\beta$ are   regularizing kernels.
  \end{theorem}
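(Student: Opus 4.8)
The plan is to recognize that the two regularized limits are nothing but algebraic rearrangements of the defining limits of the one-sided fractional velocities (Definition \ref{def:frdiff2}), so the proof amounts to substituting the explicit kernels and cancelling the common factor $\epsilon^\beta$. I would treat the forward and backward statements separately, each under the existence of the corresponding one-sided velocity, which is the content of the \emph{and/or} in the hypothesis.

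First, for the forward limit, I would set $K = \fdiffplus{f}{x}{\beta}$ and substitute $g(x,\epsilon) = f(x) + K\,\epsilon^\beta$ into the quotient. The additive term $K\,\epsilon^\beta$ cancels against the denominator, leaving
\[
\frac{f(x+\epsilon) - g(x,\epsilon)}{\epsilon^\beta} = \fracvarplus{f}{x}{\beta} - K \epnt
\]
Since $\llim{\epsilon}{0}{\fracvarplus{f}{x}{\beta}} = K$ by definition and $K$ is constant in $\epsilon$, the right-hand side tends to $K - K = 0$. The backward limit is handled the same way with $K^\star = \fdiffmin{f}{x}{\beta}$ and $g^\star(x,\epsilon)=f(x)-K^\star\epsilon^\beta$, giving $\fracvarmin{f}{x}{\beta} - K^\star \to 0$. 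Equivalently, both claims can be read straight off the Fractional Taylor expansion (Theorem \ref{th:holcomp1}): that theorem already writes $f(x\pm\epsilon)$ as the kernel plus a term $\gamma\,\epsilon^\beta$ with $\llim{\epsilon}{0}{\gamma}=0$, so each numerator equals $\gamma\,\epsilon^\beta$ and each quotient equals $\gamma$.

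I do not anticipate a genuine obstacle. The hypothesis $f \in \fclass{C}{0}$ serves only to make the increments $f(x\pm\epsilon)$ well defined, and the existence of each one-sided velocity is exactly what closes the respective limit. The only point that needs care is bookkeeping rather than analysis: the two displayed equalities must be proved independently, each assuming only its own one-sided fractional velocity, so that the result is not inadvertently made to require the simultaneous existence of both velocities.
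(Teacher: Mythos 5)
Your proof is correct and is tighter than the paper's own. The identity
$\frac{f(x+\epsilon)-g(x,\epsilon)}{\epsilon^\beta} = \fracvarplus{f}{x}{\beta} - K$
together with the definition of $\fdiffplus{f}{x}{\beta}$ as $\llim{\epsilon}{0}{\fracvarplus{f}{x}{\beta}}$ settles the forward claim in one line, and likewise for the backward claim; your insistence on proving the two displayed limits independently, each under its own one-sided hypothesis, is exactly the right reading of the ``and/or''. The paper instead splits into a differentiable and a non-differentiable case: in the differentiable case it applies l'H\^opital's rule to the quotient and then solves the ordinary differential equation $\frac{1}{\beta}\epsilon^{1-\beta}g^{\prime}_\epsilon(x,\epsilon)=K$ to recover the form $g(x,\epsilon)=C+K\epsilon^\beta$ of the kernel, while in the non-differentiable case it invokes Theorem \ref{th:holcomp1} to write $f(x+\epsilon)=f(x)+K\epsilon^\beta+\gamma\epsilon^\beta$ with $\llim{\epsilon}{0}{\gamma}=0$ and then cancels --- which is precisely the alternate route you mention at the end. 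What the paper's longer argument buys is a derivation of the kernel (an argument that $g$ \emph{must} take this form for the regularized limit to vanish) rather than a mere verification of the stated identity; what your argument buys is uniformity and economy: no case split, no smoothness assumption on $f$ beyond continuity and the existence of the relevant one-sided velocity, and the clear observation that the theorem is essentially a rephrasing of the definition of fractional velocity.
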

  \begin{proof}
  	Let $g(x, \epsilon)$ be smooth about the  argument $\epsilon$. 
  	We will consider further four cases.
  	\begin{description}  
  	\item[ Differentiable $f(x)$]
  
  	\textit{Forward case} :
  	Let $f(x) \in \fclass{C}{1}$ in the left-open interval $ (x, x+ \epsilon]$ and $g(x, y) \in \fclass{C}{1} $ by its second variable in the closed interval  $ y \in [0,  \epsilon]$.
  	Then we can apply l'H\^opital's rule to the limit 
  	\begin{flalign*}
  	 	\llim{\epsilon}{0}{   \frac{f(x +\epsilon) - g(x, \epsilon)}{\epsilon^\beta}} & = 
  	 	\llim{\epsilon}{0}{ } \frac{\epsilon^{1- \beta}}{\beta} \left(  f^{\prime}(x +\epsilon)  - g^{\prime}_\epsilon (x, \epsilon)\right) \\
  	 	& = \fdiffplus{f}{x}{\beta} - \frac{1}{\beta} \llim{\epsilon}{0} { \epsilon^{1- \beta} g^{\prime}_\epsilon (x, \epsilon) } 
  	\end{flalign*}
 
  	by Th. \ref{corr:mixed}.
  	Let's suppose that $ \frac{1}{\beta} \epsilon^{1-\beta} g^{\prime}_\epsilon (x, \epsilon) = K $.
  	Then the resulting ordinary differential equation can be solved to yield for $g(x, \epsilon)$:
  	\[
  	g(x, \epsilon) = C + K \epsilon^\beta \epnt
  	\]
  	Therefore, $ K= \fdiffplus{f}{x}{\beta}$. 
  	Moreover, setting $C = f(x)$ allows for interpolation with the fractional variation operator.
  	
  	\textit{Backward case}:
  	\begin{flalign*}
  	 \llim{\epsilon}{0}{ \frac{ g(x, \epsilon)- f(x -\epsilon) }{\epsilon^\beta}} 
    & =	\llim{\epsilon}{0}{ } \frac{\epsilon^{1- \beta}}{\beta} \left(  f^{\prime}(x -\epsilon)  + g^{\prime}_\epsilon (x, \epsilon)\right) \\
    & = \fdiffmin{f}{x}{\beta} + \frac{1}{\beta} \llim{\epsilon}{0} { \epsilon^{1- \beta} g^{\prime}_\epsilon (x, \epsilon) }  
  	\end{flalign*}
  	by Th. \ref{corr:mixed}.
  	Therefore, in this case $ K= - \fdiffmin{f}{x}{\beta}$. 

  	\item[ Non-differentiable $f(x)$]
  	
  	\textit{Forward case}:
  	According to Th. \ref{th:holcomp1} the existence of $\fdiffplus{f}{x}{\beta}$ implies that
  	\[
  	f(x +\epsilon)= f(x) + K \epsilon^\beta +\gamma \; \epsilon^\beta  \ecma
  	\] where $ K= \fdiffplus{f}{x}{\beta}$ and \llim{\epsilon}{0}{ \gamma}=0.
  	Therefore,
  	\begin{flalign*}
  	\llim{\epsilon}{0}{   \frac{f(x +\epsilon) - g(x, \epsilon)}{\epsilon^\beta}} & =  
  	\llim{\epsilon}{0}{ \frac{f(x) + K \epsilon^\beta +\gamma \; \epsilon^\beta - g(x,  \epsilon)}{\epsilon^\beta }} \\
  	& = \llim{\epsilon}{0}{\frac{f(x) + K \epsilon^\beta  - g(x,  \epsilon)}{\epsilon^\beta }}
	\end{flalign*}
  	Therefore, since the value $f(x)$ is arbitrary we can set 
  	$g(x,\epsilon) = f(x)+ K \epsilon^\beta $.
  	
  	\textit{Backward case}:
 	\[
 	f(x -\epsilon)= f(x) - K \epsilon^\beta +\gamma \; \epsilon^\beta  \ecma
 	\] where $ K= \fdiffplus{f}{x}{\beta}$ and \llim{\epsilon}{0}{ \gamma}=0.
 	Therefore,
 	\begin{flalign*}
 	\llim{\epsilon}{0}{  \frac{ g(x, \epsilon)-  f(x -\epsilon)  }{\epsilon^\beta}} & = 
 	\llim{\epsilon}{0}{ \frac{- f(x) + K \epsilon^\beta -\gamma \; \epsilon^\beta + g(x,  \epsilon)}{\epsilon^\beta }}   \\
 	& = \llim{\epsilon}{0}{\frac{- f(x) + K \epsilon^\beta  + g(x,  \epsilon)}{\epsilon^\beta }}
 	\end{flalign*}
 	Therefore, in this case 
 	$g(x,\epsilon) = f(x)- K \epsilon^\beta $.
 	\end{description}
  \end{proof}
  
  \begin{remark}
  	The proof can be extended also for $\beta=1$ (Differentiable case) where it can be noted that 
  	$ g^\prime_\epsilon (x, \epsilon) = K$ and therefore  
  	$ g (x, \epsilon) = K \; \epsilon + C $ implying that $ g (x, \epsilon) = f(x) + f^\prime (x) \; \epsilon  $.
  \end{remark}

 The last Theorem can be also used to extend the usual l'H\^opital -- Bernoulli rule for the indefinite form 
 $0/0$:
  \begin{theorem}[Fractional l'H\^opital -- Bernoulli, case 0/0]
  	\label{th:frLhop}
  	Let $f(x) =0$ and $g(x) =0$ for a given $x$ for two functions $f,g \in \fclass{C}{0}$ 
  	and   $\fdiffplus{g}{x}{\beta} \neq 0$ identically in $[x, x+\epsilon]$, 
  	Then  
  	\begin{flalign*}
  	 	 \frac{\fdiffplus{f}{x}{\beta} }{\fdiffplus{g}{x}{\beta}} =
  	 	\llim{\epsilon}{0}{ \frac {f(x+ \epsilon)} {g(x+ \epsilon)} } \\
  	 	 \frac{\fdiffmin{f}{x}{\beta} }{\fdiffmin{g}{x}{\beta}}=
  	 	\llim{\epsilon}{0}{ \frac {f(x- \epsilon)} {g(x- \epsilon)} } 
  	\end{flalign*}
  \end{theorem}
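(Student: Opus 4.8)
The plan is to invoke the Fractional Taylor expansion of Theorem \ref{th:holcomp1} for both $f$ and $g$, cancel the common factor $\epsilon^\beta$, and then pass to the limit. First I would treat the forward case. Since $\fdiffplus{f}{x}{\beta}$ exists, Theorem \ref{th:holcomp1} supplies
\[
f(x+\epsilon) = f(x) + \fdiffplus{f}{x}{\beta}\,\epsilon^\beta + \gamma_f\,\epsilon^\beta \ecma
\]
with $\llim{\epsilon}{0}{\gamma_f}=0$, and an analogous expansion for $g$ with error term $\gamma_g$ satisfying $\llim{\epsilon}{0}{\gamma_g}=0$. Using the hypothesis $f(x)=g(x)=0$, both expansions collapse to $f(x+\epsilon)=\left( \fdiffplus{f}{x}{\beta}+\gamma_f\right)\epsilon^\beta$ and $g(x+\epsilon)=\left( \fdiffplus{g}{x}{\beta}+\gamma_g\right)\epsilon^\beta$.

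Next I would form the quotient. The common factor $\epsilon^\beta$ cancels identically, leaving
\[
\frac{f(x+\epsilon)}{g(x+\epsilon)} = \frac{\fdiffplus{f}{x}{\beta}+\gamma_f}{\fdiffplus{g}{x}{\beta}+\gamma_g} \epnt
\]
Passing to the limit and using $\llim{\epsilon}{0}{\gamma_f}=\llim{\epsilon}{0}{\gamma_g}=0$ then yields the asserted identity.

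The hard part will be justifying that the cancellation and the passage to the limit are legitimate, which requires the denominator $g(x+\epsilon)$ to stay away from zero on a punctured right-neighbourhood of $\epsilon=0$. This is exactly where the hypothesis $\fdiffplus{g}{x}{\beta}\neq 0$ (identically on $[x,x+\epsilon]$) enters: because $\fdiffplus{g}{x}{\beta}+\gamma_g \to \fdiffplus{g}{x}{\beta}\neq 0$, the factor $\fdiffplus{g}{x}{\beta}+\gamma_g$ is bounded away from $0$ for all sufficiently small $\epsilon>0$, and hence so is $g(x+\epsilon)$ after the $\epsilon^\beta$ rescaling. Thus the quotient is well defined on such a neighbourhood and its limit equals $\fdiffplus{f}{x}{\beta}/\fdiffplus{g}{x}{\beta}$. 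I would remark that no l'H\^opital-type differentiation is actually needed here, unlike in the classical statement; the fractional Taylor expansion already encodes the leading-order behaviour of the $0/0$ form.

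Finally, the backward case follows by the same argument applied to the backward half of Theorem \ref{th:holcomp1}, replacing $f(x+\epsilon)$ and $g(x+\epsilon)$ by $f(x-\epsilon)$ and $g(x-\epsilon)$ and $\fdiffplus{\cdot}{x}{\beta}$ by $\fdiffmin{\cdot}{x}{\beta}$. Here each expansion carries a leading term $-K^\star\epsilon^\beta$, but the common sign cancels in the ratio, so no extra sign bookkeeping survives and the identity $\fdiffmin{f}{x}{\beta}/\fdiffmin{g}{x}{\beta}=\llim{\epsilon}{0}{f(x-\epsilon)/g(x-\epsilon)}$ emerges by the identical limit computation.
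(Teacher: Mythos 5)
Your proposal is correct and follows essentially the same route as the paper: both substitute the fractional Taylor expansion of Theorem \ref{th:holcomp1} for $f$ and $g$, use $f(x)=g(x)=0$ to reduce the quotient to $(\fdiffplus{f}{x}{\beta}+\gamma_f)/(\fdiffplus{g}{x}{\beta}+\gamma_g)$, and pass to the limit. Your added remark justifying that the denominator stays bounded away from zero is a small point of extra care that the paper's proof leaves implicit.
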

  \begin{proof}
  	Forward case:
  	 By assumption
  	 $ f(x + \epsilon) = f(x) + K \epsilon^\beta +\gamma \epsilon^\beta =  K \epsilon^\beta +\gamma \epsilon^\beta
  	 $ setting $ \fdiffplus{f}{x}{\beta} = K$.
  	 $ g(x + \epsilon) = g(x) + P \epsilon^\beta +\gamma \epsilon^\beta =  P \epsilon^\beta +\gamma^\star \epsilon^\beta
  	 $ setting $ \fdiffplus{g}{x}{\beta} = P$.
  	 Then we have
  	 \[
  	 \frac{f(x + \epsilon)}{g(x + \epsilon)} = \frac{K + \gamma}{P + \gamma^\star} \epnt
  	 \]
  	 Taking the limit gives 
  	 \[
  	 	\llim{\epsilon}{0}{ \frac {f(x+ \epsilon)} {g(x+ \epsilon)} }  = \frac{K}{P} = \frac{\fdiffplus{f}{x}{\beta} }{\fdiffplus{g}{x}{\beta}}
  	 \]
  	 The same reasoning can be applied for the backward case.
  \end{proof}

  \begin{definition}[Composition of variations]
  	\label{def:comp}
  	Let $f(x)$ be continuous about the point $x$. 
  	Let the composition of fractal variations be defined as
  	\begin{align}
  	\label{eq:fraccomp}
  	\fdiffplus {f}{x}{\alpha   \circ \beta} := \llim{\epsilon}{0}{\frac{  \Delta_{\epsilon}^{+}[f] (x) - \fdiffplus {f}{x}{\beta} \epsilon^{\beta} }{\epsilon ^{\alpha+ \beta} }}   \ecma
  	\\
  	\fdiffmin {f}{x}{\alpha  \circ \beta} := \llim{\epsilon}{0}{\frac{  \Delta_{\epsilon}^{-}[f] (x) - \fdiffmin {f}{x}{\beta} \epsilon^{\beta} }{\epsilon ^{\alpha+ \beta}} }  \ecma
  	\end{align}
  	where $ 0 < \alpha, \beta \leq 1$.
  	
  \end{definition}
  \begin{remark}
  	We will assume that $\fdiffplus {f}{x}{0}   \equiv 0$. Indeed, since $f(x)$ is continuous about $x$ then
  	$\llim{\epsilon}{0}{ \Delta_{\epsilon}^{+}[f] (x)  ]} = 	\llim{\epsilon}{0}{ \Delta_{\epsilon}^{-}[f] (x)  ]} =0$.
  \end{remark}
  \begin{proposition}
  	Under this definition for the general case $\alpha \neq \beta$ it follows that
  	$ \fdiffplus {f}{x}{\alpha   \circ \beta} \neq \fdiffplus {f}{x}{\beta   \circ \alpha} $.
  \end{proposition}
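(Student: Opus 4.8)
The plan is to read off both compositions directly from Definition \ref{def:comp} and to subtract them. Writing the two limits over the common denominator $\epsilon^{\alpha+\beta}$, the increment $\deltaplus{f}{x}$ enters each numerator with the same sign, so it cancels upon subtraction and, wherever both compositions are finite, one is left with
\[
\fdiffplus{f}{x}{\alpha \circ \beta} - \fdiffplus{f}{x}{\beta \circ \alpha} = \llim{\epsilon}{0}{ \frac{ \fdiffplus{f}{x}{\alpha}\, \epsilon^{\alpha} - \fdiffplus{f}{x}{\beta}\, \epsilon^{\beta} }{\epsilon^{\alpha+\beta}} } = \llim{\epsilon}{0}{ \left( \fdiffplus{f}{x}{\alpha}\, \epsilon^{-\beta} - \fdiffplus{f}{x}{\beta}\, \epsilon^{-\alpha} \right) } \epnt
\]
This single identity carries the whole argument: since $\alpha \neq \beta$ the two monomials $\epsilon^{-\beta}$ and $\epsilon^{-\alpha}$ diverge at different rates, so the combination can vanish only in the degenerate situation where both velocities are zero.

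Next I would make the dichotomy precise by taking, without loss of generality, $\alpha < \beta$ and invoking the fractional Taylor expansion of Theorem \ref{th:holcomp1} together with Proposition \ref{th:diffvar2}. For a function whose point-wise critical order at $x$ equals $\beta$ one has $\fdiffplus{f}{x}{\alpha}=0$ by Proposition \ref{th:diffvar2}, while $\fdiffplus{f}{x}{\beta} = K \neq 0$, and the expansion reads $\deltaplus{f}{x} = K\epsilon^{\beta} + \gamma\,\epsilon^{\beta}$ with $\llim{\epsilon}{0}{\gamma}=0$. Substituting this into the two definitions shows that the ordering which subtracts the \emph{lower-order} velocity first, namely $\fdiffplus{f}{x}{\alpha \circ \beta}$, removes the surviving $K\epsilon^{\beta}$ term and leaves a residual of order $\epsilon^{\alpha+\beta}$, whereas $\fdiffplus{f}{x}{\beta \circ \alpha}$ subtracts only the vanishing $\alpha$-term and thus retains the dominant contribution, forcing $\fdiffplus{f}{x}{\beta \circ \alpha} = \llim{\epsilon}{0}{ (K+\gamma)\,\epsilon^{-\alpha} }$ to diverge. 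The two quantities are therefore unequal.

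Finally I would exhibit an explicit witness so that the asserted inequality is generic rather than vacuous: take a continuous $f$ whose forward increment at $x$ satisfies $\deltaplus{f}{x} = \epsilon^{\beta} + \epsilon^{\alpha+\beta}$ with $\alpha < \beta$ and $\alpha+\beta \leq 1$. A short computation then gives $\fdiffplus{f}{x}{\beta}=1$, $\fdiffplus{f}{x}{\alpha}=0$, hence $\fdiffplus{f}{x}{\alpha \circ \beta}=1$ while $\fdiffplus{f}{x}{\beta \circ \alpha}$ diverges, confirming $\fdiffplus{f}{x}{\alpha \circ \beta} \neq \fdiffplus{f}{x}{\beta \circ \alpha}$. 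The main obstacle I anticipate is the bookkeeping of admissibility: the termwise subtraction of the two limits is only legitimate where both compositions are finite, so the proof must be organized as a case analysis on which of the two velocities (of smaller or larger order) is being subtracted, and the statement must be phrased as a \textbf{generic} inequality, since in the exceptional case $\fdiffplus{f}{x}{\alpha}=\fdiffplus{f}{x}{\beta}=0$ the right-hand side of the identity above is identically zero and the two compositions coincide.
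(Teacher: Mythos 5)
Your proposal follows essentially the same route as the paper: both cancel the increment $\deltaplus{f}{x}$ in the difference of the two defining quotients and reduce everything to the behaviour of $\fdiffplus{f}{x}{\alpha}\,\epsilon^{-\beta} - \fdiffplus{f}{x}{\beta}\,\epsilon^{-\alpha}$ as $\epsilon \to 0$ (the paper via l'H\^opital, you by directly simplifying the exponents). Your added case analysis and explicit witness are a genuine refinement rather than a detour: the paper's blanket conclusion that ``the limit is diverging'' silently assumes at least one of the two velocities is nonzero, and your observation that the compositions coincide in the degenerate case $\fdiffplus{f}{x}{\alpha} = \fdiffplus{f}{x}{\beta} = 0$ correctly identifies the hypothesis under which the proposition as stated actually holds.
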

  	\begin{proof}
	  	Indeed if we take the difference
	  	\[
	  	\frac{  \Delta_{\epsilon}^{+}[f] (x) - \fdiffplus {f}{x}{\beta} \epsilon^{\beta} }{\epsilon ^{\alpha+ \beta}} - \frac{  \Delta_{\epsilon}^{+}[f] (x) - \fdiffplus {f}{x}{\alpha} \epsilon^{\alpha} }{\epsilon ^{\alpha+ \beta} } = - \frac{ \fdiffplus {f}{x}{\beta} \epsilon^{\beta}   - \fdiffplus {f}{x}{\alpha} \epsilon^{\alpha} }{\epsilon ^{\alpha+ \beta}}
	  	\]
  		If we take the limit and apply  L'H\^opital's rule for $\epsilon$ we get
  		\[
  		-\llim{\epsilon}{0}{} \frac{1}{\alpha + \beta} \epsilon^{1- \left( \alpha + \beta\right) } 
  		\left(
  		\beta \fdiffplus {f}{x}{\beta} \epsilon^{\beta -1} - \alpha \fdiffplus {f}{x}{\alpha} \epsilon^{\alpha -1} \right) \]
  		\[
  		= - \frac{1}{\alpha + \beta} \llim{\epsilon}{0}{} 
  		\left( \frac{\beta}{\epsilon^{\alpha}} \fdiffplus {f}{x}{\beta} - 
  		\frac{\alpha}{\epsilon^{\beta}} \fdiffplus {f}{x}{\alpha} 
  		\right) 
  		\]
  		Therefore, if $  \alpha  \neq  \beta $ the limit is diverging.
  	\end{proof}

  \begin{definition}
  	\label{def:regul}
  	Fractional order  regularization is defined in the following way:
  	\begin{flalign*}
  	\frac{  \dj \, ^{\beta+} }{ d x} f(x):=  \fdiffplus {f}{x}{1-\beta   \circ \beta} = \llim{\epsilon}{0}{\frac{  \Delta_{\epsilon}^{+}[f] (x) - \fdiffplus {f}{x}{\beta} \epsilon^{\beta} }{\epsilon  }} \\
  	\frac{  \dj \, ^{\beta-} }{ d x} f(x):=  \fdiffmin {f}{x}{1-\beta   \circ \beta} = \llim{\epsilon}{0}{\frac{  \Delta_{\epsilon}^{-}[f] (x) - \fdiffmin {f}{x}{\beta} \epsilon^{\beta} }{\epsilon  }}
  	\end{flalign*}
  	We will require as usual that the forward and backward regularized derivatives be equal for a uniformly continuous function. 
  We will use further also a shortened notation $\dj f (x)$ when the value of $\beta$ is fixed. 
  \end{definition}

   By direct calculation we can use the composition of variations to regularize the first derivative about non-differentiable points. 	The proof follows from Th. \ref{th:regtaylor}.

  Having established the bounds of variation we can proceed to a new result for a H\"olderian function
  of grade $\alpha$.      	
  Under this regularization we have the following result:
  \begin{theorem}[Orthogonality of grades]
  	Let $f \in \holder{\alpha}$, which is not a sum of \fclass{C}{1} and \holder{\alpha} functions. Then 
  	\[	
  	\dj^{\alpha} \, f (x) \ \fdiffplus {f}{x}{\alpha}  =
  	\dj^{\alpha} \, f (x) \ \fdiffmin {f}{x}{\alpha} = 0 \ecma
  	\]
  	where we interpret regularization with regard to the grade $\alpha$.
  \end{theorem}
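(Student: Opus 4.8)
The plan is to exploit the product structure of the conclusion: since the claim is that a product of two scalars vanishes, it suffices to prove that whenever one factor is nonzero the other must be zero. Concretely, I would first dispose of the trivial case---if $\fdiffplus{f}{x}{\alpha}=0$ (respectively $\fdiffmin{f}{x}{\alpha}=0$) the corresponding product is already zero---and then reduce the whole theorem to the single implication: if $\fdiffplus{f}{x}{\alpha}=K\neq 0$, then the regularized derivative $\dj^{\alpha}f(x)$ vanishes.

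To extract the behaviour of the regularized derivative I would combine the forward fractional Taylor expansion of Theorem \ref{th:holcomp1} with Definition \ref{def:regul}. The expansion gives $\Delta_{\epsilon}^{+}[f](x)=K\epsilon^{\alpha}+\gamma\,\epsilon^{\alpha}$ with $\llim{\epsilon}{0}{\gamma}=0$, so that
\[
\dj^{\alpha} f(x) = \llim{\epsilon}{0}{} \frac{\Delta_{\epsilon}^{+}[f](x) - K\epsilon^{\alpha}}{\epsilon} = \llim{\epsilon}{0}{\gamma\,\epsilon^{\alpha-1}}.
\]
Because $\alpha<1$ this is an indeterminate product of $\gamma\to 0$ against $\epsilon^{\alpha-1}\to\infty$, and its finiteness is exactly the statement that the regularized derivative is defined. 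Writing $L:=\dj^{\alpha}f(x)$, finiteness of this limit forces the refined local expansion
\[
f(x+\epsilon) = f(x) + K\epsilon^{\alpha} + L\,\epsilon + o(\epsilon).
\]

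The heart of the argument is then a contradiction with the irreducibility hypothesis. Suppose $L\neq 0$. The refined expansion displays $f$, near $x$, as the superposition of a genuinely non-differentiable $\holder{\alpha}$ contribution carrying the $K\epsilon^{\alpha}$ singularity together with a contribution whose increment is $L\epsilon+o(\epsilon)$, i.e.\ a $\fclass{C}{1}$ term with ordinary derivative $L$ (compare the remark following Theorem \ref{th:regtaylor}, where the regularization of a $\fclass{C}{1}$ function returns its derivative). This would exhibit $f$ as a sum of a $\fclass{C}{1}$ function and a $\holder{\alpha}$ function, contradicting the assumption. Hence $L=0$, and the product $\dj^{\alpha}f(x)\,\fdiffplus{f}{x}{\alpha}=LK$ vanishes. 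The backward assertion follows verbatim, applying the backward half of Theorem \ref{th:holcomp1} to $\Delta_{\epsilon}^{-}[f](x)$ and $\fdiffmin{f}{x}{\alpha}$.

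I expect the main obstacle to be the rigorous justification of the decomposition step: the expansion $f(x+\epsilon)=f(x)+K\epsilon^{\alpha}+L\epsilon+o(\epsilon)$ is only a pointwise statement at $x$, whereas the hypothesis speaks of $f$ as a function. To make the contradiction airtight one must either read ``not a sum of $\fclass{C}{1}$ and $\holder{\alpha}$'' in the pointwise/local sense at $x$, or else construct explicit representatives---taking the Hölder part to be the pure power $K\epsilon^{\alpha}$ and letting the complement absorb $L\epsilon+o(\epsilon)$---and then verify that the complement is genuinely $\fclass{C}{1}$. Controlling the $o(\epsilon)$ remainder so that this complement has a \emph{continuous} derivative, rather than mere differentiability at the single point $x$, is the delicate point; here I would lean on the linearity of the fractional velocity and of the regularization operator, together with Propositions \ref{th:diffvar1} and \ref{th:diffvar2}, to confirm that each summand carries exactly one of the two grades and hence that the separation is compatible with the hypothesis being violated.
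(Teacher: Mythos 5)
Your route is genuinely different from the paper's, and it contains a gap that you yourself flag but cannot close as stated. The paper does not argue by contradiction with the ``not a sum'' hypothesis at all: it invokes Lemma \ref{th:bondvar1}, which (under the standing requirement that the sharp H\"older constant $C_x$ at the fixed point $x$ does not depend on $\epsilon$) gives $C_x = C^{\prime}_{x} = |a|$ with $a=\fdiffplus{f}{x}{\alpha}$, so that the two-sided bound $C^{\prime}_{x}\epsilon^\alpha \leq |\deltaplus{f}{x}| \leq C_x\,\epsilon^\alpha$ collapses to the identity $|\deltaplus{f}{x}| = |a|\,\epsilon^\alpha$ for all small $\epsilon$. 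The numerator in Definition \ref{def:regul} is then identically zero, so the limit defining $\dj^{\alpha} f(x)$ exists and equals $0$ with no further argument; the ``not a sum'' hypothesis enters only through the constancy of $C_x$ in $\epsilon$ (see the example $f(x)=x+\sqrt{x}$ immediately after the theorem, where $C_x$ varies with $\epsilon$ and the conclusion fails).

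Your argument instead keeps only the one-term expansion $\deltaplus{f}{x}=K\epsilon^{\alpha}+\gamma\,\epsilon^{\alpha}$ from Theorem \ref{th:holcomp1} and tries to force $L:=\dj^{\alpha}f(x)=0$ by contradiction, and two things go wrong. First, you tacitly assume the limit $L=\llim{\epsilon}{0}{\gamma\,\epsilon^{\alpha-1}}$ exists; nothing in your setup guarantees this, whereas the paper's identity makes existence automatic. Second, and more seriously, the inference from the pointwise expansion $f(x+\epsilon)=f(x)+K\epsilon^{\alpha}+L\epsilon+o(\epsilon)$ to ``$f$ is a sum of a $\fclass{C}{1}$ function and an $\holder{\alpha}$ function'' is not available: the expansion constrains $f$ only at the single point $x$, while the hypothesis is a statement about $f$ as a function, and your proposed repair (choosing explicit representatives) requires control of $f$ away from $x$ that you do not have. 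Worse, read literally the hypothesis cannot carry the weight you place on it: for any $f\in\holder{\alpha}$ with $\alpha<1$ and any constant $L$, the decomposition $f=(f-L\,\mathrm{id})+L\,\mathrm{id}$ already exhibits $f$ as a sum of an $\holder{\alpha}$ function and a $\fclass{C}{1}$ function, since subtracting a Lipschitz term does not leave the class $\holder{\alpha}$ on a bounded interval; hence ``$L\neq 0$'' produces no contradiction. The hypothesis only becomes operative once it is translated, as the paper does, into the $\epsilon$-independence of the sharp constant $C_x$, and without that translation your contradiction step fails.
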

  \begin{proof}
  	
  	Let $f(x) \in \holder{\alpha}$.
  	We fix $x$ so that $C_x$ remains constant. 
  	\[
  	C_x^\prime \epsilon^\alpha \leq | \Delta^{+}_\epsilon f(x) | \leq  C_x   \epsilon^\alpha 
  	\]
  	Let $ \fdiffplus {f}{x}{\alpha} =a$ assuming that \textit{a} is bounded.
  	Subtracting $ |a| \epsilon^\alpha$ from both sides gives
  	\[
  	( C_x^\prime -|a|) \epsilon^\alpha \leq |\Delta^{+}_\epsilon f(x) | - |a| \epsilon^\alpha  \leq ( C_x - |a| ) \epsilon^\alpha 
  	\]
  	Therefore, if we take the \textsl{supremum} and the \textsl{infimum} bounds on $C^\prime$ and $C$ respectively we will get
  	\[
  	|\Delta^{+}_\epsilon f(x) | - |a| \epsilon^\alpha = 0
  	\]
  	There are two cases to consider.
  	If $a \geq 0$ implying also $\Delta^{+}_\epsilon f(x) \geq 0$ then
  	
  	\[
  	\Delta^{+}_\epsilon f(x)  - \fdiffplus {f}{x}{\alpha}  \epsilon^\alpha =0
  	\]
  	If $a \leq 0$ implying also $\Delta^{+}_\epsilon f(x) \leq 0$ then
  	\[
  	-\Delta^{+}_\epsilon f(x)  + \fdiffplus {f}{x}{\alpha}  \epsilon^\alpha =0
  	\]
  	Therefore in both cases 
  	\[ \dfrac{\Delta^{+}_\epsilon f(x)  - \fdiffplus {f}{x}{\alpha}  \epsilon^\alpha}{\epsilon} =0 \ecma
  	\]
  	but this is the definition of the regularized derivative. Therefore, if follows that 
  	\[
  	\frac{\ \dj^\alpha}{dx} f(x) = 0 \epnt
  	\]
  \end{proof}
  \begin{example}
  	A key aspect to the former proof was the requirement ot keep $C_x$ remains constant. We will see that its relaxation invalidates  part of the result.
  	Let $f(x) = x + \sqrt{x}$.
  	Then $\fdiffplus{f}{0}{1/2} = 1$.
  	\[
  	\deltaplus{f}{\epsilon} - \sqrt{\epsilon} = \epsilon
  	\]
  	Therefore, $ \dj f (0)=1$. Inspection of the equations shows that $C_x$ is a function of the increment
  	$\epsilon$. Therefore, the hypothesis can not be relaxed.
  \end{example}
\subsection{Mixed order regularization}  
\label{sec:mixedreg}

The regularization procedure can be extended also towards mixed orders in the following way:
\begin{definition}
	Let us define formal fractional Taylor polynomials for an  increasing positive sequence $ \left\lbrace  \alpha_i  \right\rbrace  _{i=1}^n$ bounded from above by 1  as:
	\begin{flalign*}
	T_{ \alpha, n}^{+}(x ,\epsilon) &= f(x) +  \sum\limits_{\alpha_i}^{} c_i   \epsilon^{\alpha_i }   +
	\sum\limits_{k=1}^{n} \dfrac{f^{(k)}(x )}{k!}  \epsilon^{ k }  \ecma \\
	T_{ \alpha, n}^{-}(x ,\epsilon) &= f(x) -  \sum\limits_{\alpha_i}^{} c^\star_i    \epsilon^{\alpha_i } +
	\sum\limits_{k=1}^{n} \dfrac{f^{(k)}(x )}{k!}  (- \epsilon)^{ k } \ecma
	\end{flalign*} 
	where $\alpha$ denotes the multi-index and   $ c_i  $ are arbitrary constants.
	Then $\alpha$-regularized derivatives are defined as
	\begin{flalign*}
	\dfrac{\dj^{\;n+}}{dx^n} f(x) &= (n+1)!  \llim{\epsilon}{0}{}
	\  \frac{ f( x+ \epsilon) - T_{\alpha, n} (x, \epsilon) }{\epsilon^{n+\beta}} \ecma \\
	\dfrac{\dj^{\;n-}}{dx^n} f(x) &= (-1)^n (n+1)!  \llim{\epsilon}{0}{}
	\  \frac{ T_{\alpha, n}^{-} (x, \epsilon) - f( x - \epsilon) }{\epsilon^{n+\beta}} \ecma
	\end{flalign*} 
	where $\beta = \sup {\alpha_i}$. We will require as usually that the forward and backward regularized derivatives be equal for a uniformly continuous function. 
\end{definition}
\begin{proposition}
	For a multi-H\"older function  \holder{  \alpha_1 + \alpha_2 + \ldots} according to Th. \ref{th:regtaylor} we have
	$c_i = \fdiffplus{f}{x}{\alpha_i} $ and 
	$c^\star_i = \fdiffmin{f}{x}{\alpha_i} $.
\end{proposition}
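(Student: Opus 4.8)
The plan is to read off each coefficient one exponent at a time by matching the iterated fractional Taylor expansion of $f(x+\epsilon)$ against the formal polynomial $T^{+}_{\alpha,n}(x,\epsilon)$ and invoking Theorem \ref{th:regtaylor} at every stage. The underlying principle is that an asymptotic expansion of a fixed function in the gauge scale $\{\epsilon^{\alpha_i}\}\cup\{\epsilon^{k}\}$ is unique, so two expansions that both represent $f(x+\epsilon)$ must have identical coefficients; it therefore suffices to produce one expansion whose fractional coefficients are manifestly the velocities $\fdiffplus{f}{x}{\alpha_i}$. I would order the combined set of exponents $\{\alpha_i\}\cup\{1,\dots,n\}$ increasingly and argue by induction along this ordering, treating the backward identity $c^{\star}_i=\fdiffmin{f}{x}{\alpha_i}$ symmetrically while tracking the $(-1)^k$ and $(-\epsilon)^k$ factors carried by $T^{-}_{\alpha,n}$.

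For the base case take the smallest exponent $\alpha_1$. By Theorem \ref{th:holcomp1} the existence of $\fdiffplus{f}{x}{\alpha_1}$ yields $f(x+\epsilon)=f(x)+\fdiffplus{f}{x}{\alpha_1}\,\epsilon^{\alpha_1}+\gamma\,\epsilon^{\alpha_1}$ with $\llim{\epsilon}{0}{\gamma}=0$. Dividing the residual $f(x+\epsilon)-f(x)$ by $\epsilon^{\alpha_1}$ and passing to the limit forces $c_1=\fdiffplus{f}{x}{\alpha_1}$, since every strictly higher-order contribution in $T^{+}_{\alpha,n}$ vanishes under this normalization and, by Propositions \ref{th:diffvar1} and \ref{th:diffvar2}, the smooth part contributes nothing at a sub-integer order.

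For the inductive step, assume $c_1,\dots,c_{j-1}$ together with the integer-order coefficients $f^{(k)}(x)/k!$ below $\alpha_j$ have already been identified. Subtract all of these lower-order terms from $f(x+\epsilon)$ to obtain a residual which, by the mixed-order expansion of Theorem \ref{th:frtaylormixed}, is $\mathcal{O}(\epsilon^{\alpha_j})$. Applying Theorem \ref{th:regtaylor} to this residual, with the regularizing kernel adapted to order $\alpha_j$, isolates $c_j=\fdiffplus{f}{x}{\alpha_j}$. Here the composition of variations of Definition \ref{def:comp} is exactly the device that makes the order-$\alpha_j$ velocity finite: stripping the singular lower terms $c_i\epsilon^{\alpha_i}$ with $\alpha_i<\alpha_j$ is precisely what the regularized operator of Definition \ref{def:regul} performs, so the symbol $\fdiffplus{f}{x}{\alpha_j}$ is to be read along its regularized branch rather than as the divergent raw quotient $\deltaplus{f}{x}/\epsilon^{\alpha_j}$.

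The main obstacle I anticipate is this well-definedness issue. Because the raw quotient $\deltaplus{f}{x}/\epsilon^{\alpha_j}$ diverges whenever lower fractional grades are present, one must verify that after removing every strictly-lower-order term the remaining quotient converges to a finite limit at exponent $\alpha_j$; securing this convergence at each order — where Proposition \ref{th:diffvar2} and the composition structure of Definition \ref{def:comp} do the essential work — together with the careful bookkeeping of how the fractional exponents $\alpha_i$ interleave with the integer exponents $k$ in the sorted sequence, is the delicate part. Once convergence is established the coefficient extraction is routine, and the backward statement follows by applying the same reasoning to $T^{-}_{\alpha,n}$.
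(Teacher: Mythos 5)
Your proposal takes essentially the same route as the paper: the paper's own argument is a two-sentence sketch stating that the claim follows by induction over the increasing sequence $\left\lbrace \alpha_i \right\rbrace$, with Theorem \ref{th:regtaylor} supplying a new regularization term (and hence a new coefficient) at each stage, which is exactly your skeleton. Your write-up is in fact more careful than the paper's, since you explicitly flag the well-definedness issue — that $\fdiffplus{f}{x}{\alpha_j}$ for $j>1$ must be read through the composed/regularized quotient of Definitions \ref{def:comp} and \ref{def:regul} rather than as the raw (divergent) difference quotient — a point the paper leaves implicit.
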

The proof follows by induction considering that the sequence $  \left\lbrace  \alpha_i  \right\rbrace  $ is increasing. 
Then according to Th. \ref{th:regtaylor} a new regularization term is added for every $\alpha_i$.

\begin{proposition}
\label{prop:taylorcomp}
 
Let us consider the following fractional Taylor polynomials for a positive  increasing regular sequence $ \left\lbrace  \alpha_i  \right\rbrace  _{i=1}^n$ :
\begin{flalign*}
T_{ \alpha, n}^{+}(x ,\epsilon) &= f(x) +  \sum\limits_{k=1}^{n} c_k   \epsilon^{\alpha k }     \ecma \\
T_{ \alpha, n}^{-}(x ,\epsilon) &= f(x) +  \sum\limits_{k=1}^{n} c_k  (- 1)^{ k } \epsilon^{\alpha  k }\ecma
\end{flalign*} 
where $\alpha$ denotes the multi-index and   $ c_i  $ are arbitrary constants.
Then the following expansion holds for a compound function $ f(x^\alpha)$, $\alpha \in (0, 1)$:
  \[
  f(x +\epsilon) = f(x) +  \sum\limits_{k=1}^{n} c_k   \epsilon^{\alpha k }  + \mathcal{O} \left( \epsilon^{n \,\alpha}\right)  \ecma 
  \]
  for
  \[
  c_{k} = \frac{1}{k!} \left. \dfrac{\partial\, ^k }{\partial  u^k} \;    f (u) \right|_{u={x}^{\alpha}}
  \]
  
  and
  
  \[
  f(x - \epsilon) = f(x) +  \sum\limits_{k=1}^{n} c_k   \epsilon^{\alpha k }  + \mathcal{O} \left( \epsilon^{n\,\alpha}\right)  \ecma
  \]
  for
  \[
  c_{k} = \frac{(-1)^k}{k!} \left. \dfrac{\partial\, ^k }{\partial  u^k} \;  f (u) \right|_{u={x}^{\alpha}} \epnt
  \]
\end{proposition}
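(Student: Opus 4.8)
The plan is to reduce the statement to the ordinary Taylor theorem for the \emph{outer} smooth function, reading off the coefficients $c_k$ as its ordinary Taylor coefficients, and then to account for the fractional inner argument together with the backward sign bookkeeping. First I would fix the base point and set $u_0 = x^\alpha$, so that the compound function is $f(u)$ with $u$ tied to the fractional power of the increment. Since $f$ is smooth in $u$ (this is exactly what makes the derivatives $\partial_u^k f(u)|_{u=x^\alpha}$ in the $c_k$ meaningful), the ordinary Taylor theorem gives
\[
f(u) = f(u_0) + \sum_{k=1}^{n}\frac{1}{k!}\left.\frac{\partial^k}{\partial u^k}f(u)\right|_{u=u_0}(u-u_0)^k + \mathcal{O}\!\left((u-u_0)^{n+1}\right).
\]
The entire content of the proposition is then that the increment of the inner argument satisfies $u-u_0 = \epsilon^{\alpha}$, whence $(u-u_0)^k = \epsilon^{\alpha k}$ and the coefficient of $\epsilon^{\alpha k}$ is precisely the claimed $c_k$; the smooth remainder $\mathcal{O}((u-u_0)^{n+1}) = \mathcal{O}(\epsilon^{\alpha(n+1)})$ is absorbed into the stated $\mathcal{O}(\epsilon^{n\alpha})$.

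For the forward case I would proceed by induction on $k$ in the spirit of the coefficient extraction preceding Theorem \ref{th:frtaylormixed}, evaluating
\[
c_k = \llim{\epsilon}{0}{\frac{ f(x+\epsilon) - T_{\alpha,k-1}^{+}(x,\epsilon)}{\epsilon^{\alpha k}}}
\]
for the compound function. The base step $k=1$ identifies $c_1$ with the fractional velocity $\fdiffplus{f}{x}{\alpha}$, whose leading fractional increment is regularized by Theorem \ref{th:regtaylor}; substituting the smooth expansion above shows $\fdiffplus{f}{x}{\alpha} = \frac{\partial}{\partial u}f(u)|_{u=x^\alpha}$. The inductive step subtracts the already-determined terms $\sum_{j<k}c_j\epsilon^{\alpha j}$, so that the surviving limit isolates $\frac{1}{k!}\partial_u^k f(u)|_{u=x^\alpha}$; here the regularity (strict monotonicity) of the sequence $\{\alpha_i\}$ guarantees that the exponents $\alpha k$ do not collide and that each order is matched by a single coefficient.

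For the backward case I would repeat the argument with the backward difference operator and the conventions of Definition \ref{def:fracvarn}. The only change is that the increment of the inner argument enters with the opposite sign, which propagates through the $k$-th power as the factor $(-1)^k$; this reproduces the alternating polynomial $T_{\alpha,n}^{-}$ and the stated $c_k = \frac{(-1)^k}{k!}\partial_u^k f(u)|_{u=x^\alpha}$. Equality of the forward and backward regularized expansions for a uniformly continuous $f$ then follows from Proposition \ref{prop:dualvar}.

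I expect the main obstacle to be the honest justification of the identity $u-u_0 = \epsilon^{\alpha}$ for the inner argument, that is, the precise interpretation of the compound function $f(x^\alpha)$: the expansion in pure powers $\epsilon^{\alpha k}$ is only consistent at a base point where the inner fractional power contributes $\epsilon^\alpha$ rather than an analytic (integer-power) increment, and in the backward direction the fractional power of a negative increment is not real, so one cannot substitute literally but must lean on the backward fractional-variation conventions and the sign tracking above. A secondary point to verify is that the remainder is genuinely of the claimed order and that no intermediate-order cross terms survive, which again rests on the strict monotonicity of $\{\alpha_i\}$.
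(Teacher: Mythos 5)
Your proposal is correct in substance and lands on the same identity as the paper, but it travels in the opposite direction, so it is worth comparing the two routes. The paper works bottom--up: it extracts $c_1$ and $c_2$ by the iterated-limit procedure used before Theorem \ref{th:frtaylormixed} (L'H\^opital's rule plus rationalization), observes that the operator $\tfrac{1}{\alpha}\,\epsilon^{1-\alpha}\partial_\epsilon$ so produced is exactly the chain-rule derivative $\partial_u$ for $u=\epsilon^{\alpha}$, and only then concludes by induction that the whole expansion is the ordinary Taylor series of $f$ with respect to $u=x^{\alpha}$ followed by substitution. You start from that final observation --- ordinary Taylor's theorem in the smooth outer variable $u$ with inner increment $\epsilon^{\alpha}$ --- and read the $c_k$ off directly, keeping the limit-extraction induction only as a secondary check. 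Your route is shorter and makes the structural reason for the result transparent; the paper's route is what actually ties the coefficients to the fractional-velocity machinery (the identification $c_1=\fdiffplus{f}{x}{\alpha}$ and the composed operator $\epsilon^{1-\alpha}\partial_\epsilon$ in the closing remark), which is the currency the rest of the paper trades in. The caveat you raise is genuine: the identification $u-u_0=\epsilon^{\alpha}$ holds only where the inner fractional power really contributes $\epsilon^{\alpha}$ rather than the analytic increment $(x+\epsilon)^{\alpha}-x^{\alpha}\approx\alpha x^{\alpha-1}\epsilon$, in effect at the base point $x=0$ as in the paper's $\cos(x^{1/3})$ example, and the backward substitution $(-\epsilon)^{\alpha}$ is not literally real. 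But this ambiguity is shared by, not resolved in, the paper's own proof (which does not derive the backward case at all), so it is not a gap of your argument relative to the paper's; you are in fact more explicit about it than the source.
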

 \begin{proof}
 	  We will establish the relationship to the growth of the function at $x\pm \epsilon$, respectively.
 	  The technique of the proof is similar to the previous case.
 	  
 	  Obviously for $k=1$ holds $c_1= \fdiffplus{f}{x}{\alpha}$. 
 	  Then the second coefficient can be calculated as
 	  \begin{flalign*}
 	  \llim{\epsilon}{0}{} \dfrac{ f(x+ \epsilon) - f(x) - \fdiffplus{f}{x}{\alpha}{\epsilon}^{\alpha}  }{\epsilon^m}  
 	  & = \llim{\epsilon}{0}{} \dfrac{ T_{ \alpha, n}^{+}(x ,\epsilon) - f(x) - \fdiffplus{f}{x}{\alpha}{\epsilon}^{\alpha}  }{\epsilon^m} \\
 	  & = \llim{\epsilon}{0}{}\dfrac{  c_1 \epsilon^{2\, \alpha   }  + \mathcal{O}(\epsilon^{2 \, \alpha})   }{\epsilon^m} \\
 	  & = \llim{\epsilon}{0}{ c_1 \epsilon^{2 \alpha   - m}}
 	  + \llim{\epsilon}{0}{\mathcal{O}(\epsilon^{2 \, \alpha -m })}
 	  \end{flalign*}
 	  Therefore, in order for the RHS to be finite we must have $m=2 \alpha$.
 	  Then for the LHS we have
 	  \[
 	  \llim{\epsilon}{0}{} \dfrac{ f(x+ \epsilon) - f(x) - \fdiffplus{f}{x}{\alpha}{\epsilon}^{\alpha}  }{\epsilon^{2\,\alpha}}
 	  = \frac{1}{{2 \, \alpha}} \llim{\epsilon}{0}{} \dfrac{ f^{\prime}(x+ \epsilon)  - \alpha \; \fdiffplus{f}{x}{\alpha}{\epsilon}^{  \alpha-1}  }{\epsilon^{2\, \alpha - 1}}
 	  \]
 	  The argument of last limit is then
 	  \[
 	  \dfrac{\epsilon^{  \alpha-1 }} {\epsilon^{2\, \alpha - 1}}
 	  \left( \epsilon^{ 1- \alpha  }  f^{\prime}(x+ \epsilon) - \fdiffplus{f}{x}{\alpha} \right) 
 	  = \dfrac{\epsilon^{ 1- \alpha  }  f^{\prime}(x+ \epsilon) - \fdiffplus{f}{x}{\alpha} }{\epsilon^\alpha  }
 	  \]
 	  
 	  The limit can be evaluated by application of L'H\^opital's rule and rationalization.
 	  \[
 	  \llim{\epsilon}{0}{} \dfrac{\epsilon^{ 1- \alpha  }  f^{\prime}(x+ \epsilon) - \fdiffplus{f}{x}{\alpha} }{\epsilon^\alpha  } = \frac{1}{\alpha}  \llim{\epsilon}{0}{} \epsilon^{ 1- \alpha  } 
 	  \frac{\partial}{ \partial \epsilon} \left( \epsilon^{ 1- \alpha  }  f^{\prime}(x+ \epsilon)\right) 
 	  \]
 	  Therefore,
 	  \[
 	  c_{2} = \dfrac{1}{2 \, \alpha^2} \llim{\epsilon}{0}{} \epsilon^{ 1- \alpha  } 
 	  \frac{\partial}{ \partial \epsilon} \left( \epsilon^{ 1- \alpha  }  \frac{\partial}{ \partial \epsilon}  f (x+ \epsilon)\right) 
 	  \]
 	  However, this can be recognized as double application of the ordinary compound differential rule.
 	  Indeed,
 	  let us suppose that 
 	  \[
 	  f(x) = f(u(x)) \ecma \ u(x)=x^\alpha
 	  \]
 	  Then
 	  \[
 	  \left(  \frac{d  u(x)}{dx} \right)^{-1} \frac{d \, f(x)}{dx} = \frac{\partial    f(x)}{\partial u}
 	  \]
 	  and
 	  \[
 	  \frac{\partial   f(x)}{\partial u} = \frac{x^{1-\alpha}}{\alpha}  \frac{d \, f(x)}{dx}
 	  \]
 	  and we can recognize the derivative of $x^\alpha $ evaluated at $x=\epsilon$.
 	  Similar arguments can be demonstrated to hold  for all $k \leq n$ by induction.

 	  Therefore, in the general case we would have as expected expansion of the function $f(x)$ in Taylor series w.r.t $u (x) = x^\alpha$ followed by substitution.
 	  Therefore,
 	  \[
 	  f(x +\epsilon) = f(x) +  \sum\limits_{k=1}^{n} c_k   \epsilon^{\alpha k }  + \mathcal{O} \left( \epsilon^{n \,\alpha}\right)  
 	  \]
 	  for
 	  \[
 	  c_{k} = \frac{1}{k!} \dfrac{\partial\, ^k }{\partial  u^k} \;  f (u)
 	  \]
 	  and
 	  \[
 	  f(x - \epsilon) = f(x) +  \sum\limits_{k=1}^{n} c_k   \epsilon^{\alpha k }  + \mathcal{O} \left( \epsilon^{n\,\alpha}\right)   
 	  \]
 	  for
 	  \[
 	  c_{k} = \frac{(-1)^k}{k!}  \dfrac{\partial\, ^k }{\partial  u^k} \;  f (u) \epnt
 	  \]

 \end{proof}
\begin{remark}
	  	  In the original notation we can also state the result explicitly
	  	  \begin{equation}
	  	  c_{k} = \frac{\left(  \pm 1 \right)^k }{k! \ \alpha ^k } \ \llim{\epsilon}{0}{} \prod_{}^{k } \circ \left(  \epsilon^ {1-\alpha}  \dfrac{\partial\  }{\partial  \epsilon} \right)  f (x \pm \epsilon)
	  	  \end{equation}
	  	  where the product has to be understood as composition.
\end{remark}

   \begin{example}
   Consider the function $f(x)=\cos (x^{1/3})$. We will develop the fractional Taylor expansion about $x=0$.
   The first derivative of the function is
   \[ 
   f^\prime(x)=  - \frac{1}{3}\frac{\sin\left( {{x}^{\frac{1}{3}}}\right) }{  {{x}^{\frac{2}{3}}}} \ecma
   \]
   which is undefined for $x=0$.
   According to Th. \ref{th:frtaylormixed} for $\alpha=\frac{1}{3}$ we would have

     \[
     c_1 = 
     - \lim_{\epsilon \to 0}{ \sin   {{\epsilon}^{1/3}}   } = 0
     \]
   Continuing for other degrees
    \[
    c_2 = 
    -\frac{1}{2}\lim_{\epsilon \to 0}{  \cos   {\epsilon}^{1/3} } = -\frac{1}{2}
    \]
    and so on.
    Therefore, in agreement with the previous proposition we would have
    \[
    f(x) \approx 
    1 - \frac{1}{2}\cdot {{x}^{\frac{2}{3}}} + \frac{1}{24}\cdot {{x}^{\frac{4}{3}}} 
     - \frac{1}{720}\cdot {{x}^{2}}+ \frac{1}{40320}\cdot {{x}^{\frac{8}{3}}} + \ldots
    \]
    \end{example}

\section{Compound differential rules}
\label{sec:comprules}

\begin{theorem}
	Let Let $ f(t, w ) \in \mathbb{C}^1$  be composition with
	$w(x) \in \holder{\beta}$ in the interval $ [x, x+\epsilon]$   then the following  expansions  hold: 
	
	\begin{flalign*}
	\fdiffplus{f}{w}{\beta}=
	\frac{\partial f}{\partial w } \fdiffplus{w}{x}{\beta} \\
	\fdiffmin{f}{w}{\beta}=
	\frac{\partial f}{\partial w } \fdiffmin{w}{x}{\beta}
	\end{flalign*}
\end{theorem}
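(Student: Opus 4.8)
The plan is to reduce the fractional velocity of the composition $x \mapsto f(t, w(x))$ --- with the first slot held fixed, so that all $x$-dependence enters through the inner H\"olderian function $w$ --- to the fractional velocity of $w$ alone, exploiting the $\fclass{C}{1}$ regularity of $f$ in its second argument. I would treat the forward case first; the backward case follows by the symmetric argument. Writing $\deltaplus{w}{x} = w(x+\epsilon) - w(x)$ for the increment of the inner function, the starting point is the fractional variation of the composite,
\[
\frac{ f(w(x+\epsilon)) - f(w(x)) }{\epsilon^\beta}\ecma
\]
whose limit as $\epsilon \to 0$ is, by definition, $\fdiffplus{f}{w}{\beta}$.

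First I would invoke the Fractional Taylor expansion (Theorem \ref{th:holcomp1}): whenever $\fdiffplus{w}{x}{\beta}$ exists --- as presupposed by the right-hand side of the claim --- we have $w(x+\epsilon) = w(x) + (K+\gamma)\epsilon^\beta$ with $K = \fdiffplus{w}{x}{\beta}$ and $\llim{\epsilon}{0}{\gamma} = 0$. In particular $\deltaplus{w}{x} = O(\epsilon^\beta)$, so the inner increment tends to zero while the quotient $\deltaplus{w}{x}/\epsilon^\beta = \fracvarplus{w}{x}{\beta}$ stays bounded and converges to $K$.

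Next I would use the differentiability of $f$ in its second argument. Because $f \in \fclass{C}{1}$, first-order differentiability at the point $w(x)$ gives
\[
f(w(x) + \deltaplus{w}{x}) - f(w(x)) = \frac{\partial f}{\partial w}\,\deltaplus{w}{x} + r\ecma
\]
where the remainder $r$ is $o(\deltaplus{w}{x})$ as the increment vanishes. Dividing by $\epsilon^\beta$ and splitting the remainder as $(r/\deltaplus{w}{x})\cdot(\deltaplus{w}{x}/\epsilon^\beta)$, the first factor tends to $0$ while the second is bounded by the previous step, so the remainder contribution vanishes in the limit. The surviving term is $\frac{\partial f}{\partial w}\cdot(\deltaplus{w}{x}/\epsilon^\beta)$, whose limit is $\frac{\partial f}{\partial w}\,\fdiffplus{w}{x}{\beta}$, as claimed.

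The main obstacle is the bookkeeping of the remainder when $\deltaplus{w}{x}$ happens to vanish for some $\epsilon$, since there the factorization $r/\deltaplus{w}{x}$ is ill-defined. I would circumvent this by noting that on exactly those $\epsilon$ the left-hand difference is $f(w(x)+0)-f(w(x))=0$, forcing $r=0$, so the quotient contributes nothing; away from them the estimate $r = o(\epsilon^\beta)$, obtained by composing $o(\deltaplus{w}{x})$ with $\deltaplus{w}{x}=O(\epsilon^\beta)$, holds. With this uniform control the product-of-limits step is legitimate, and the backward case is recovered verbatim upon replacing $\deltaplus{w}{x}$ by $\deltamin{w}{x}$ and using the backward expansion of Theorem \ref{th:holcomp1} together with $K^\star = \fdiffmin{w}{x}{\beta}$.
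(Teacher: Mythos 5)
Your proof is correct and follows essentially the same route as the paper's: both factor the fractional variation of the composite through the increment of $w$ and pass to the limit using the $\fclass{C}{1}$ regularity of $f$ together with the existence of $\fdiffplus{w}{x}{\beta}$. The only real difference is that the paper writes the chain factor as the ratio $\Delta^{+}_{\epsilon}f(w)/\Delta w$ (citing a prior identity) and takes the limit directly, whereas you use the linearization-plus-remainder form of differentiability, which has the minor advantage of remaining well defined at those $\epsilon$ for which $\deltaplus{w}{x}=0$.
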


\begin{proof}
	According to \cite{Prodanov2015} we have
	\[
	\fracvarplus{f}{w}{\beta}=
	\frac{\Delta^{+}_{\epsilon} f (w)}{\Delta w } \fracvarplus{w}{x}{\beta} \epnt
	\]
	Taking the limit provides the result.
	The backward case is proven in a similar way.
\end{proof}

\subsection{Regularized Taylor -- It\^o expansion}
\label{sec:itorule}
The  differentiation rule for compositions of functions can be derived using the regularization of the derivatives.
It provides a result formally analogous to  It\^o's Lemma. 
In a similar way the rule can be derived using a non-differentiable embedding principle  \cite{Cresson2011}.

\begin{theorem}[Taylor -- It\^o  expansion]
	\label{th:itotaulor}
	Let Let $ f(t, w ) \in \mathbb{C}^2$ be composition with $w(x) \in \holder{1/2}$ in the interval $x \in [x, x+\epsilon] $ then the following  expansions  hold: 
	\begin{flalign*}
	\frac{\dj^{+} }{dt} f(t,w) =\frac{\partial f}{\partial t} + \frac{\partial f}{\partial w}   \cdot   \frac{\dj^{+} }{dt } w (x)    + \frac{1}{2}   \frac{\partial ^2 f}{\partial w^2} [ w, w]^{+} (x)   \\
	\frac{\dj^{-} }{dt} f(t,w) =\frac{\partial f}{\partial t} + \frac{\partial f}{\partial w}   \cdot   \frac{\dj^{-} }{dt } w (x)    - \frac{1}{2}   \frac{\partial ^2 f}{\partial w^2} [ w, w]^{-} (x)
	\end{flalign*} 
\end{theorem}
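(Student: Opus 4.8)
The plan is to reduce the statement to a joint second-order Taylor expansion of $f$ in its two arguments, with the order of each term dictated by the H\"older exponent of $w$. Set $F(t) := f(t, w(t))$ and $\Delta w := \deltaplus{w}{t}$, and apply the regularized derivative of Definition~\ref{def:regul} with $\beta = 1/2$, so that
\[
\frac{\dj^{+}}{dt} F(t) = \llim{\epsilon}{0}{} \frac{\deltaplus{F}{t} - \fdiffplus{F}{t}{1/2}\, \epsilon^{1/2}}{\epsilon} \ecma
\]
and analogously for the backward map. Since $f \in \fclass{C}{2}$, Taylor's theorem about $(t, w(t))$ gives
\[
\deltaplus{F}{t} = \frac{\partial f}{\partial t}\,\epsilon + \frac{\partial f}{\partial w}\,\Delta w + \frac12 \frac{\partial^2 f}{\partial w^2}(\Delta w)^2 + \frac{\partial^2 f}{\partial t \partial w}\,\epsilon\,\Delta w + \frac12 \frac{\partial^2 f}{\partial t^2}\,\epsilon^2 + R \ecma
\]
with $R$ the Peano remainder.

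The next step is order bookkeeping. By the fractional Taylor expansion of Theorem~\ref{th:holcomp1}, $w \in \holder{1/2}$ yields
\[
\Delta w = \fdiffplus{w}{t}{1/2}\,\epsilon^{1/2} + \gamma\,\epsilon^{1/2} \ecma \quad \llim{\epsilon}{0}{\gamma}=0 \epnt
\]
so that $\Delta w = \mathcal{O}(\epsilon^{1/2})$, $(\Delta w)^2 = \mathcal{O}(\epsilon)$, $\epsilon\,\Delta w = \mathcal{O}(\epsilon^{3/2})$ and $\epsilon^2 = \mathcal{O}(\epsilon^2)$. The crucial observation is that the quadratic term $(\Delta w)^2$ and the linear term $\epsilon$ sit at the \emph{same} order, which is the source of the It\^o correction. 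Dividing the expansion by $\epsilon^{1/2}$ and letting $\epsilon \to 0$ retains only the terms of order $\epsilon^{1/2}$, so the preceding first-order compound rule gives $\fdiffplus{F}{t}{1/2} = \tfrac{\partial f}{\partial w}\,\fdiffplus{w}{t}{1/2}$; subtracting $\fdiffplus{F}{t}{1/2}\,\epsilon^{1/2}$ then cancels exactly the leading part of $\tfrac{\partial f}{\partial w}\,\Delta w$ and leaves the regularizing combination $\tfrac{\partial f}{\partial w}\big(\Delta w - \fdiffplus{w}{t}{1/2}\,\epsilon^{1/2}\big)$.

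It then remains to divide by $\epsilon$ and pass to the limit term by term. The $\tfrac{\partial f}{\partial t}\,\epsilon$ term contributes $\tfrac{\partial f}{\partial t}$; the regularizing $w$-combination contributes $\tfrac{\partial f}{\partial w}\,\frac{\dj^{+}}{dt}w(t)$ by Definition~\ref{def:regul} applied to $w$; the quadratic term contributes $\tfrac12 \tfrac{\partial^2 f}{\partial w^2}\,[w,w]^{+}(t)$ once the forward co-variation is identified with $\llim{\epsilon}{0}{}(\Delta w)^2/\epsilon = \big(\fdiffplus{w}{t}{1/2}\big)^2$; and the remaining terms $\epsilon\,\Delta w$, $\epsilon^2$ and $R$ are all $o(\epsilon)$ and drop out. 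This gives the forward identity. For the backward map I would expand $f\big(t-\epsilon,\, w(t) - \deltamin{w}{t}\big)$ and form $\deltamin{F}{t}$; the increment $(-\epsilon, -\deltamin{w}{t})$ makes the quadratic Taylor term enter $\deltamin{F}{t}$ with a \emph{negative} sign, which propagates to $-\tfrac12 \tfrac{\partial^2 f}{\partial w^2}\,[w,w]^{-}(t)$ and accounts for the sign asymmetry in the statement.

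The main obstacle is controlling the remainder and the cross terms so that precisely the quadratic term survives. Because the dominant increment scale is $\Delta w \sim \epsilon^{1/2}$ rather than $\epsilon$, the nominally ``second-order'' term $(\Delta w)^2$ cannot be discarded; conversely the $\fclass{C}{2}$ Peano remainder obeys $R = o\big(\epsilon^2 + (\Delta w)^2\big) = o(\epsilon)$, whence $R/\epsilon \to 0$, but this must be argued from a \emph{joint} expansion in both slots rather than from iterated one-variable expansions, using uniform continuity of the second partials on a neighbourhood of $(t, w(t))$. A secondary point, quickly dispatched, is to confirm that $[w,w]^{\pm}$ is well defined and equals $\big(\fdiffpm{w}{t}{1/2}\big)^2$, which follows from Theorem~\ref{th:holcomp1} since $\gamma \to 0$ forces $(\Delta w)^2/\epsilon = \big(\fdiffpm{w}{t}{1/2} + \gamma\big)^2 \to \big(\fdiffpm{w}{t}{1/2}\big)^2$.
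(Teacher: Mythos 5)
Your proposal is correct and follows essentially the same route as the paper's proof: a second-order Taylor expansion of $f$ about $(t,w(t))$, substitution of the fractional expansion $\Delta w = \big(\fdiffplus{w}{t}{1/2}+\gamma\big)\epsilon^{1/2}$ so that $(\Delta w)^2$ lands at order $\epsilon$, cancellation of the $\epsilon^{1/2}$ term via the first-order compound rule, and identification of the remaining quotient with the regularized derivative, the backward sign flip coming from the evenness of the quadratic Taylor term. Your treatment is in fact slightly tidier than the paper's in two respects it leaves implicit: you argue the Peano remainder is $o(\epsilon)$ from the joint two-variable expansion, and you justify that $[w,w]^{\pm}$ exists and equals $\big(\fdiffpm{w}{t}{1/2}\big)^2$, whereas the paper handles the degenerate situation by a separate case split on whether $\fdiffpm{w}{x}{1/2}$ vanishes.
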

\begin{proof}
We will prove first the forward case.

\begin{description}
\item [Forward case]
 By second order  Taylor's expansion we get
  \begin{flalign*}
  f( w(x + \epsilon), x + \epsilon)  = & f(w, t) + \frac{\partial f}{\partial x}   \; \epsilon    +
  \frac{\partial f}{\partial w} \deltaplus{w}{x} + \\
  & \frac{1}{2} \frac{\partial^2 f}{\partial x^2}   \; \epsilon^2 + 
  \frac{\partial^2 f}{\partial x \, \partial w  } (x) \deltaplus{w}{x} \epsilon + \\
  & \frac{1}{2} \frac{\partial^2 f}{\partial w^2}  \; \left( \deltaplus{w}{x}\right) ^2 
  + \mathcal{O } (\epsilon^2 )
  \end{flalign*}
 
  It follows that two cases have to be considered:
  
 \textbf{Differentiable case} 
 
This case is realized when $ \fdiffplus{w}{x}{1/2} =0$ and the ordinary derivative is defined:
 \begin{flalign*}
   \diffplus{f}{w, x }{\epsilon}  = & \frac{\partial f}{\partial x} + \frac{\partial f}{\partial w} \diffplus{w}{ x }{\epsilon} + \\
    & \frac{1}{2} \frac{\partial^2 f}{\partial x^2} \;\epsilon +
     \frac{\partial^2 f}{\partial x \, \partial w  } (x) \deltaplus{w}{x} + \\
  & \frac{1}{2} \frac{\partial^2 f}{\partial w^2}  \; [w, w]^{+}_\epsilon
   + \mathcal{O } (\epsilon )
  \end{flalign*}
  Taking the limit gives the expected result
  \[
  \frac{d \, f}{d x}  =  \frac{\partial f}{\partial x}   + \frac{\partial f}{\partial w} \frac{d w}{d x} \epnt
  \]
  
  \textbf{Non-Differentiable case} 
  
 This case is realized  when $ \fdiffplus{w}{x}{1/2} = K \neq 0$ and the ordinary derivative is not defined:
  In this case 
  $ w(x + \epsilon) = w(x) + (K+ \gamma ) \sqrt{\epsilon}
  $ where $\llim{\epsilon}{0}{\gamma}=0$.
  We substitute partially in the Taylor expansion:
   \begin{flalign*}
   f\left(  w(x + \epsilon), x + \epsilon \right)  - f \left( w, x \right) &  =   \frac{\partial f}{\partial x}   \; \epsilon +
   \frac{\partial f}{\partial w} (K+ \gamma ) \sqrt{\epsilon} \ + \\
  & \frac{1}{2}   \frac{\partial^2 f}{\partial x^2}   \; \epsilon^2 +
     \frac{\partial^2 f}{\partial x \, \partial w  } (x) \deltaplus{w}{x} \epsilon + \\
  & \frac{1}{2}  \frac{\partial^2 f}{\partial w^2}  \; \left( \deltaplus{w}{x}\right) ^2 
   + \mathcal{O } (\epsilon^2 ) 
   \end{flalign*}
    Then after rearrangement we get
 \begin{flalign*}      
     \frac{ f\left(  w(x + \epsilon), x + \epsilon \right)  - f \left( w, x \right) -\frac{\partial f}{\partial w} (K+ \gamma ) \sqrt{\epsilon}}{\epsilon}   =  \\
      \frac{\partial f}{\partial x}   +
      \frac{1}{2} \frac{\partial^2 f}{\partial x^2}   \; \epsilon  + 
       \frac{\partial^2 f}{\partial x \, \partial w  } (x) \deltaplus{w}{x}   + 
       \frac{1}{2}  \frac{\partial^2 f}{\partial w^2}  \;   [w, w]^{+}_\epsilon
      + \mathcal{O } (\epsilon  ) \epnt
    \end{flalign*}
  But the LHS is the forward regularized derivative of $f(w, x)$. Therefore, we obtain
  \begin{equation}
   \label{eq:taylord_fwd}
      \frac{\dj^{+} f}{d x} = \frac{\partial f}{\partial x}   + \frac{1}{2}  \frac{\partial^2 f}{\partial w^2}  \;   [w, w]^{+} 
  \end{equation}
  Therefore, we can rewrite the expansion using the uniform notation
  \begin{equation}
   \frac{\dj^{+} f}{d x} = \frac{\partial f}{\partial x}   + \frac{\partial f}{\partial w} \; \frac{\dj w}{d x}  + \frac{1}{2}  \frac{\partial^2 f}{\partial w^2}  \;   [w, w]^{+} 
  \end{equation}


\item[Backward case]

 By second order  Taylor's expansion we get
 \begin{flalign*}
 f( w(x - \epsilon), x - \epsilon) & = f(w, x) - \frac{\partial f}{\partial x}   \; \epsilon    -
 \frac{\partial f}{\partial w} \deltamin{w}{x} + \\
 & \frac{1}{2} \frac{\partial^2 f}{\partial x^2}   \; \epsilon^2 + 
 \frac{\partial^2 f}{\partial x \, \partial w  } (x) \deltamin{w}{x} \epsilon + \\
& \frac{1}{2} \frac{\partial^2 f}{\partial w^2}  \; \left( \deltaplus{w}{x}\right) ^2 
 - \mathcal{O } (\epsilon^2 )
 \end{flalign*}
 
 \textbf{Differentiable case} 
 
 When $ \fdiffmin{w}{x}{1/2} =0$ and the ordinary derivative is defined:
 \begin{flalign*}
 \diffmin{f}{w, x }{\epsilon}  = \frac{\partial f}{\partial x} + \frac{\partial f}{\partial w} \diffmin{w}{ x }{\epsilon} - \\
 \frac{1}{2} \frac{\partial^2 f}{\partial x^2} \;\epsilon -
 \frac{\partial^2 f}{\partial x \, \partial w  } (x) \deltamin{w}{x} -
 \frac{1}{2} \frac{\partial^2 f}{\partial w^2}  \; [w, w]^{-}_\epsilon
 + \mathcal{O } (\epsilon ) \epnt
 \end{flalign*}
 Taking the limit gives the expected result
 \[
 \frac{d \, f}{d x}  =  \frac{\partial f}{\partial x}   + \frac{\partial f}{\partial w} \frac{d w}{d x} \epnt
 \]
 
  \textbf{Non-Differentiable case} 
  
  when $ \fdiffmin{w}{x}{1/2} = K \neq 0$ and the ordinary derivative is not defined:
  In this case 
  $ w(x - \epsilon) = w(x) - (K \gamma ) \sqrt{\epsilon}
  $ where $\llim{\epsilon}{0}{\gamma}=0$.
  We substitute partially in the Taylor expansion:
  \begin{flalign*}
   f \left( w, x \right) - f\left(  w(x - \epsilon), x - \epsilon \right)   &  =   \frac{\partial f}{\partial x}   \; \epsilon 
  - \frac{\partial f}{\partial w} ( K+ \gamma ) \sqrt{\epsilon}  \\
  & - \frac{1}{2}   \frac{\partial^2 f}{\partial x^2}   \; \epsilon^2  
  \frac{\partial^2 f}{\partial x \, \partial w  } (x) \deltamin{w}{x} \epsilon - \\
 & - \frac{1}{2}  \frac{\partial^2 f}{\partial w^2}  \; \left( \deltaplus{w}{x}\right) ^2 
  + \mathcal{O } (\epsilon^2 ) \epnt
  \end{flalign*}
  
  Then after rearrangement we get
  \begin{flalign*}      
  \frac{f \left( w, x \right) - f\left(  w(x - \epsilon), x - \epsilon \right) +\frac{\partial f}{\partial w} (K+ \gamma ) \sqrt{\epsilon}}{\epsilon}    =   \\
  \frac{\partial f}{\partial x}   -
  \frac{1}{2} \frac{\partial^2 f}{\partial x^2}   \; \epsilon  - 
  \frac{\partial^2 f}{\partial x \, \partial w  } (x) \deltaplus{w}{x}   - 
   \frac{1}{2}  \frac{\partial^2 f}{\partial w^2}  \;   [w, w]^{+}_\epsilon
  + \mathcal{O } (\epsilon   ) \epnt
  \end{flalign*}
  But the LHS is the backwards regularized derivative of $f(w, x)$. Therefore, we obtain
  \begin{equation}
  \label{eq:taylord_bck}
  \frac{\dj^{-} f}{d x} = \frac{\partial f}{\partial x}   - \frac{1}{2}  \frac{\partial^2 f}{\partial w^2}  \;   [w, w]^{-} 
   \end{equation}
  Therefore, we can rewrite the expansion using the uniform notation
  \begin{equation}
  \frac{\dj^{-} f}{d x} = \frac{\partial f}{\partial x}   + \frac{\partial f}{\partial w} \; \frac{\dj w}{d x}  - \frac{1}{2}  \frac{\partial^2 f}{\partial w^2}  \;   [w, w]^{-} 
  \end{equation}
\end{description}

\end{proof}
From the  proof of Th. \ref{th:itotaulor} and the results of Cresson and Greff \cite{Cresson2011} it follows that:

\begin{proposition}
\label{lemma:diffitorule1}
For a composite function $ f(t, w ) \in \mathbb{C}^2$ where $w(t) \in \holder{1/2}$ the following approximation holds
\begin{align}
\deltaplus{f}{x,w}
& = \epsilon \; \frac{\partial f}{\partial x } + \sqrt{\epsilon}\; \frac{\partial f}{\partial w} \, \fdiffplus{w}{x}{1/2} + \frac{\epsilon}{2} \sum\limits^{}_{x_i,x_j} \frac{\partial ^2 f}{\partial x_j \, \partial x_j} [ x_i, \ x_j]^{+}_{\epsilon}  + 
\mathcal{O} \left( \sqrt{\epsilon} \right) \\
\deltamin{f}{x,w}
& =  \epsilon \; \frac{\partial f}{\partial x} - \sqrt{\epsilon}\; \frac{\partial f}{\partial w} \, \fdiffmin{w}{x}{1/2} - \frac{\epsilon }{2} \sum\limits^{}_{x_i,x_j} \frac{\partial ^2 f}{\partial x_j \, \partial x_j} [ x_i, \ x_j]^{-}_{\epsilon} + 
\mathcal{O} \left( \sqrt{\epsilon} \right) \ecma
\end{align}
where $\epsilon = dx $.
\end{proposition}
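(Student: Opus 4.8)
The plan is to read the expansion straight off the second-order Taylor development already established in the proof of Theorem \ref{th:itotaulor}, and then to re-bin its terms according to their order in $\epsilon$ using the \holder{1/2} regularity of $w$. Concretely, I would start from the forward development
\[
\deltaplus{f}{x,w} = \frac{\partial f}{\partial x}\,\epsilon + \frac{\partial f}{\partial w}\,\deltaplus{w}{x} + \frac{\partial^2 f}{\partial x\,\partial w}\,\deltaplus{w}{x}\,\epsilon + \frac{1}{2}\frac{\partial^2 f}{\partial w^2}\left(\deltaplus{w}{x}\right)^2 + \mathcal{O}(\epsilon^2),
\]
whose left-hand side is exactly the increment of the compound function. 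The structural substitution is furnished by the Fractional Taylor expansion of Theorem \ref{th:holcomp1}: since $w\in\holder{1/2}$, one has $\deltaplus{w}{x} = \fdiffplus{w}{x}{1/2}\sqrt{\epsilon} + \gamma\sqrt{\epsilon}$ with $\llim{\epsilon}{0}{\gamma}=0$. This identifies the increment of $w$ as an $\mathcal{O}(\sqrt{\epsilon})$ quantity and its square as $\left(\deltaplus{w}{x}\right)^2 = \epsilon\,[w,w]^{+}_{\epsilon}$ by the very definition of the finite covariation, turning the pure second-order $w$-term into the $\frac{\epsilon}{2}\,\partial^2_w f\,[w,w]^{+}_{\epsilon}$ contribution of the claimed formula.

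Next I would sort the surviving terms by magnitude. The drift $\partial_x f\,\epsilon$ and the covariation term are $\mathcal{O}(\epsilon)$; the genuinely leading correction $\partial_w f\,\fdiffplus{w}{x}{1/2}\sqrt{\epsilon}$ is $\mathcal{O}(\sqrt{\epsilon})$; and the residual pieces — the $\partial_w f\,\gamma\sqrt{\epsilon}$ remainder, the mixed term $\partial^2_{xw}f\,\deltaplus{w}{x}\,\epsilon\sim\epsilon^{3/2}$, and the $\partial^2_{xx}f\,\epsilon^2$ term — are absorbed into the error symbol. The multi-index sum $\sum_{x_i,x_j}$ is the bookkeeping device for the general case in which $f$ depends on several arguments of mixed H\"older grade: only pairs in which both components are of grade $1/2$ produce a surviving covariation, because by Proposition \ref{th:diffvar1} any \fclass{C}{1} argument has vanishing fractional velocity of order $1/2$, so its covariations are subdominant. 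The backward formula follows verbatim from the backward half of the proof of Theorem \ref{th:itotaulor}, with the sign changes already recorded there together with $\deltamin{w}{x} = -\fdiffmin{w}{x}{1/2}\sqrt{\epsilon} + \gamma\sqrt{\epsilon}$.

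The only genuinely delicate point is the accounting of orders in the error term. I must check that no term surviving to order $\epsilon$ or $\sqrt{\epsilon}$ has been discarded and, in particular, that the $\gamma\sqrt{\epsilon}$ remainder — which is formally the same order as the kept $\sqrt{\epsilon}$ term — is correctly recognised as $o(\sqrt{\epsilon})$ once the explicit leading coefficient $\fdiffplus{w}{x}{1/2}$ has been extracted, so that it does not contaminate the retained first-order contribution. Invoking the correspondence with the non-differentiable (stochastic) embedding of Cresson and Greff \cite{Cresson2011} then lets me reinterpret this deterministic development as the pathwise analogue of the It\^o differential, which both motivates the grouping into drift, first-order, and covariation parts and confirms that $\tfrac{1}{2}\,\partial^2_w f$ is the correct coefficient of the covariation term.
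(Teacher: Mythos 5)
Your proposal is correct and follows essentially the same route as the paper, which gives no standalone argument but simply asserts that the proposition ``follows from the proof of Th.~\ref{th:itotaulor} and the results of Cresson and Greff'': namely, reading the statement off the second-order Taylor development of $f(w(x\pm\epsilon),x\pm\epsilon)$, substituting $\deltaop{w}{x}{\epsilon}{\pm}=(\pm K+\gamma)\sqrt{\epsilon}$ from Theorem~\ref{th:holcomp1}, and identifying $\left(\deltaop{w}{x}{\epsilon}{\pm}\right)^2=\epsilon\,[w,w]^{\pm}_{\epsilon}$. Your explicit bookkeeping of orders --- in particular the observation that the stated $\mathcal{O}\left(\sqrt{\epsilon}\right)$ remainder must be read as $o\left(\sqrt{\epsilon}\right)$ once the leading coefficient $\fdiffplus{w}{x}{1/2}$ is extracted --- is a useful clarification of a point the paper leaves implicit, but it is not a different proof.
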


The last result can be used for numerical approximation of fractional singular problems.

%

\begin{example}
	Let's calculate the expansion of a quadratic function $f(v)=\dfrac{1}{2} v^2(x)$ composing with a \holder{1/2} function $v(x)$.
	\[
	\frac{\dj^{+} }{dx} f(x,v) =
	v (x)\,  \frac{ \dj^{+} v}{d\,x}  + \dfrac{1}{2} [ v,v]^{+} (x)
	\]

	For example, if $v(x)=\sqrt{x}$ we have
	for $x \neq 0$ 
	\[
	\frac{\dj^{+} }{dx} f(x,v) =\sqrt{x} \; \dfrac{1}{2 \sqrt{x}} +  \dfrac{1}{2} [ \sqrt{x},\sqrt{x}]^{+} = \dfrac{1}{2 } +0 = \dfrac{1}{2 } \ecma
	\]
	which is consistent with the  derivative of $ f(x) =\dfrac{1}{2} \, x $.
	For $x=0$ we have 
	\[
	\frac{\dj^{+} }{dx} f(x,v) = \sqrt{x} \; \cdot 0 + \dfrac{1}{2} = \dfrac{1}{2} \ecma
	\]
	which is again consistent with usual analytical treatment. 
\end{example}

\section{Discussion}
\label{sec:disc}

Classical physic variables, such as velocity or acceleration, are considered to be differentiable functions of position. On the other hand, quantum mechanical paths \cite{Abbott1981}, \cite{Amir-Azizi1987}, \cite{Sornette1990} and   Brownian motion trajectories were found to be typically non-differentiable.
The relaxation of the differentiability assumption opens new avenues in describing physical phenomena, for example, using the scale relativity theory.  

The aim of the present work is partly to explore the mathematical foundations of the theory of scale relativity championed by Nottale \cite{Nottale1989}, \cite{Nottale1998}, \cite{Nottale2011}.
The main tenet of the theory is that there is no preferred scale of description for physical reality.  
Therefore, a physical phenomenon should be described simultaneously at many irreducible scales.
This assumption naturally leads to fractal and non-differentiable representation of the studied phenomena.

Specifically, the derivative regularization procedure demonstrated here and the resulting Taylor -- It\^o expansion give support to some of the derivation procedures employed in scale relativity.

\section*{Acknowledgments}
The work has been supported in part by a grant from Research Fund - Flanders (FWO), contract number 0880.212.840.
The author would like to acknowledge Dr. Dave L. Renfro for helpful feedback on the manuscript.

\bibliographystyle{plain}  
\bibliography{qvar1}

\end{document}